\def\alex#1 {\fbox {\footnote {\ }}\ \footnotetext { From Alex: {\color{blue}#1}}}
\def\qi#1 {\fbox {\footnote {\ }}\ \footnotetext { From Qi: {\color{red}#1}}}
\def\meng#1 {\fbox {\footnote {\ }}\ \footnotetext { From Meng: {\color{blue}#1}}}
\newcommand{\rank}{{\rm Rank}}
\newcommand{\bs}{{\mathbb{S}}}
\newcommand{\gs}{{\rm GS}}
\newcommand{\gl}{{\rm GL}}
\newcommand{\ad}{{\rm ad}}
\newcommand{\F}{\mathbb{F}}
\newcommand{\C}{{\cal C}}
\newtheorem{theorem}{Theorem}
\newtheorem{lemma}[theorem]{Lemma}
\newtheorem{proposition}[theorem]{Proposition}
\newtheorem{corollary}[theorem]{Corollary}
\theoremstyle{definition}
\newtheorem{definition}[theorem]{Definition}
\newtheorem{example}{Example}
\newtheorem{remark}{Remark}
\begin{document}

\title{LDPC Codes Based on the Space of Symmetric Matrices over Finite Fields
\thanks{C. Ma's work was supported by the National Natural Science Foundation of China under Grant No. 11271004 and 11371121. Q. Wang's work was supported by the Shenzhen fundamental research programs under Grant No. JCYJ20150630145302234.
}
}

\author{Meng Zhao, \thanks{M. Zhao is with the Department of Mathematics, Southern University of Science and Technology, Shenzhen, Guangdong, China (email: meng\_zhao08@163.com)}
Changli Ma, \thanks{C. Ma is with the College of Mathematics and Information Science, Hebei Normal University, Shijiazhuang, Hebei, China (email: ma\_changli@hotmail.com)}
and Qi Wang, \IEEEmembership{Member, IEEE}\thanks{Q. Wang is with the Department of Computer Science and Engineering, Southern University of Science and Technology, Shenzhen, Guangdong, China (email: wangqi@sustc.edu.cn).}
}

\maketitle

\begin{abstract}
  In this paper, we present a new method for explicitly constructing regular low-density parity-check (LDPC) codes based on $\bs_{n}(\F_{q})$, the space of $n\times n$ symmetric matrices over $\F_{q}$. Using this method, we obtain two classes of binary LDPC codes, $\C(n,q)$ and $\C^{T}(n,q)$, both of which have grith $8$. Then both the minimum distance and the stopping distance of each class are investigated. It is shown that the minimum distance and the stopping distance of $\C^{T}(n,q)$ are both $2q$. As for $\C(n,q)$, we determine the minimum distance and the stopping distance for some special cases and obtain the lower bounds for other cases.

\end{abstract}

\begin{IEEEkeywords}
  LDPC code, minimum distance, stopping distance, symmetric matrix.
\end{IEEEkeywords}

\section{Introduction}\label{sec-intro}

A low-density parity-check (LDPC) code is a linear code, whose parity-check matrix $H$ satisfies the following structural properties: (1) each row consists of $\rho$ ``ones''; (2) each column consists of $\gamma$ ``ones''; (3) the number of ``ones'' in common between any two columns, denoted by $\lambda$, is no greater than 1; (4) both $\rho$ and $\gamma$ are small compared to the length of the code and the number of rows in $H$, respectively. The LDPC codes defined above are called \emph{regular}. Throughout this paper, we restrict attention to regular LDPC codes.

The concept of LDPC codes was first introduced by Gallager in 1960's~\cite{Gal62}. However, this remarkable invention was almost disregarded for the deficiency of capacity computation by researchers in the following thirty years. It was not until late 1990's and early 2000's that MacKay and Neal rediscovered LDPC codes~\cite{Mac96}. After that, a lot of efforts have been devoted to designing and constructing efficient LDPC codes. According to different constructing methods, LDPC codes can be classified into two main categories: computer-generated pseudo-random  LDPC codes and well-structured LDPC codes. In general, the performance of well-structured LDPC codes is not as good as computer-generated pseudo-random ones. But such codes are needed for implementation in the sense that their properties can be explicitly determined in many cases. The well-structured LDPC codes are constructed using various methods, e.g., finite geometry~\cite{Kou01,Tang05,FDWM15}, graphs~\cite{Kim04}, partial geometry~\cite{Van11}, combinatorial designs~\cite{Joh01,Laen07,Fal12}, generalized polygons~\cite{Liu05} and so on.

Let $\C$ be a binary $[n, k, d]$ linear code and $H$ be a parity-check matrix of the code $\C$. A \emph{stopping set} in $\C$ is the support of a binary vector having the length $n$ that does not have exactly one $1$ in common with any row of $H$. The minimum size of non-empty stopping sets of $\C$ is called the \emph{stopping distance} of the code $\C$, denoted by $s(H)$~\cite{Kas03}. It is well known that the minimum distance determines the code's error correcting capacity and the
stopping distance is an important measure of the performance of iterative decoding over the binary erasure channel. Thus, the study of the minimum distance and the stopping distance of a code is important in both theory and applications. However, in most cases, it is rather difficult to determine the minimum distance and the stopping distance of a specific LDPC code. Kou, Lin, and Fossorier~\cite{Kou01} constructed four classes of LDPC codes based on points and lines of finite
geometries and gave the lower bounds of the minimum distance. In several special cases, the minimum distance of the codes were determined. Later, Tang, Xu, Lin and Abdel-Ghaffar~\cite{Tang05} provided constructing methods based on hyperplanes of two consecutive dimensions of the finite geometries, which generalized the idea of~\cite{Kou01}. They also got the lower bounds of the minimum distance. Xia and Fu~\cite{Xia06} obtained the lower bounds of the stopping distance of these codes. Kim, Peled, Perepelitsa, Pless, and Friedland~\cite{Kim04} gave an explicit construction of two classes of LDPC codes based on $q$-regular bipartite graphs. They determined the minimum distance and the stopping distance for some special cases, and got the lower bounds for some other cases. Recently, Feng, Deng, Wang, and Ma~\cite{FDWM15} presented three classes of LDPC codes based on the totally isotropic subspaces of symplectic, unitary and orthogonal spaces over finite fields respectively and determined the minimum distance for some cases.


In this paper, we first construct a bipartite graph $G(n,q)$ with the adjacency matrix $H(n,q)$ whose vertex set contains points and maximal sets of rank $1$ of $\bs_{n}(\F_{q})$. A point is adjacent to a maximal set of rank $1$ if and only if the maximal set of rank 1 contains the point. It is easily verified that $H(n,q)$ and $H^T(n,q)$ both satisfy the four structural properties of parity-check matrices of LDPC codes. Then we obtain two classes of LDPC codes, $\C(n,q)$ and $\C^{T}(n,q)$, whose parity-check matrices are $H(n,q)$ and $H^T(n,q)$, respectively. Several properties of these two classes of LDPC codes are investigated, including the girth, the minimum distance, the stopping distance and the dimension. Some simulation results show that the constructed LDPC codes perform better than random codes.

The remainder of this paper is organized as follows: in Section~\ref{sec-pre}  we give a brief introduction to the space of $n\times n$ symmetric matrices over $\F_{q}$, and present some auxiliary results. In Section~\ref{sec-const}, we construct the two classes of binary girth-$8$ LDPC codes, $\C(n,q)$ and $\C^{T}(n,q)$. Both the minimum distance and the stopping distance of $\C^{T}(n,q)$ and $\C(n,q)$ are mainly discussed in Section~\ref{sec-main}. Some simulation results are provided in Section~\ref{sec-sim}. Finally, we conclude the paper by some open problems in Section~\ref{sec-con}.

\section{Preliminaries and Auxiliary Results}\label{sec-pre}

Let $\F_{q}$ be the finite field with $q$ elements, where $q$ is a prime power, and let $n$ be a positive integer. The set of $n \times n$ symmetric matrices over $\F_{q}$ is called \emph{the space of $n \times n$ symmetric matrices over $\F_{q}$}, denoted by $\bs_{n}(\F_{q})$. We call the elements of $\bs_{n}(\F_{q})$ the {\em points} of $\bs_{n}(\F_{q})$. Clearly, $\bs_{n}(\F_{q})$ has $q^{\frac{n(n+1)}{2}}$ points. Let $\gs_{n}(\F_{q})$ be the {\em motion group} of $\bs_{n}(\F_{q})$, i.e., the group consisting of all transformations of the form
 $$
 X\mapsto P^{T}XP+S, \quad\textrm{for all}\  X \in \bs_{n}(\F_{q})
 $$
 where $P\in \gl_{n}(\F_{q})$ and $S \in \bs_{n}(\F_{q})$. Obviously, $\gs_{n}(\F_{q})$ acts transitively on $\bs_{n}(\F_{q})$.

For two points $S, S'$ in $\bs_{n}(\F_{q})$, the \emph{arithmetic distance} between $S$ and $S'$ is defined to be the value of $\rank(S - S')$, denoted by $\ad(S, S')$. If $\ad(S, S')=1$, $S$ and $S'$ are said to be \emph{adjacent}.  When $S \neq S'$, the \emph{distance} $d(S, S')$ between $S$ and $S'$ is defined to be the least positive integer $r$ for which there is a sequence of $r+1$ points $S_{0}, S_{1}, \ldots, S_r$ with $S_0 = S$ and $S_r = S'$ such that $S_{i}$ and $S_{i+1}$ are adjacent for $0\leq i\leq r-1$. When $S=S'$, we define $d(S, S')=0$. The arithmetic distance between each pair of points of $\bs_n(\F_q)$ is preserved under the elements of the group $\gs_n(\F_q)$, and so is the distance between any two points of $\bs_n(\F_q)$~\cite{Wan96}. The following lemma establishes a link between the arithmetic distance and the distance of two points $S, S' \in \bs_{n}(\F_q)$.

\begin{lemma}\cite[Proposition 5.5]{Wan96}\label{lem-add}
For every two points $S, S' \in \bs_n(\F_q)$, $\ad(S, S') \leq d(S, S')$. More precisely, when the characteristic of $\F_q$ is not $2$, the equality sign holds, i.e., $\ad(S, S') = d(S, S')$. When the characteristic of $\F_q$ is $2$,
   \begin{equation*}
     \ad(S, S') = \left\{\begin{array}{ll}
       d(S, S') & \textrm{ if $S - S'$ is non-alternate,} \\
       d(S, S') - 1 & \textrm{ if $S - S'$ is alternate.}
     \end{array}\right.
   \end{equation*}
 \end{lemma}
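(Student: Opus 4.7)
The plan is to handle the easy direction $\ad(S,S') \le d(S,S')$ first (valid in all characteristics), and then achieve each reverse inequality by exhibiting an explicit decomposition of $S' - S$ into rank-$1$ symmetric summands whose partial sums form an adjacency path from $S$ to $S'$. For the easy direction, if $S = S_0, S_1, \ldots, S_r = S'$ is any adjacency path realizing $d(S,S') = r$, then $S' - S = \sum_{i=1}^{r}(S_i - S_{i-1})$ expresses $S' - S$ as a sum of $r$ rank-$1$ matrices, and subadditivity of rank gives $\ad(S,S') \le r$. For everything below, the transitivity of $\gs_n(\F_q)$ on $\bs_n(\F_q)$, together with its preservation of both $\ad$ and $d$, lets me translate so that $S = 0$; writing $T := S'$ and $k := \rank(T)$, the task becomes to realize $T$ as a sum of as few rank-$1$ symmetric matrices as possible.

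When $\mathrm{char}(\F_q) \ne 2$, I diagonalize $T$ by congruence to $\mathrm{diag}(d_1,\ldots,d_k,0,\ldots,0)$ with $d_i \ne 0$, pull the decomposition $\sum_{i=1}^{k} d_i E_{ii}$ back through the congruence, and read off an adjacency path of length $k$ from $0$ to $T$, giving $d(0,T) \le k = \ad(0,T)$ and hence equality. In characteristic $2$ with $T$ non-alternate, the analogous canonical form $I_k \oplus 0$ applies (every scalar in characteristic $2$ is a square, so any nonzero diagonal entry can be normalized to $1$ and then exploited to clear its row and column via Gauss-type congruence), again producing a length-$k$ decomposition and equality.

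The alternate case in characteristic $2$ is the main obstacle and requires two separate claims. First, any decomposition $T = \sum_{i=1}^{r} v_i v_i^T$ must satisfy $r \ge k+1$: assembling the $v_i$ as columns of an $n \times r$ matrix $V$, I have $T = V V^T$, and the vanishing of the diagonal of $T$ combined with the Frobenius identity $\sum_i v_{ij}^2 = (\sum_i v_{ij})^2$ (valid in characteristic $2$) forces $V \mathbf{1}_r = 0$; consequently $\rank(V) \le r - 1$ and $k = \rank(T) \le \rank(V) \le r - 1$, so $d(0,T) \ge k+1$. Second, a decomposition with exactly $r = k+1$ summands exists: every non-degenerate alternate form in characteristic $2$ is unique up to congruence, so $T$ (after padding for the kernel) is congruent to $I_{2m} + J_{2m}$, where $k = 2m$; the identity $I_{2m} + J_{2m} = \sum_{i=1}^{2m} e_i e_i^T + \mathbf{1}\mathbf{1}^T$ displays $2m+1 = k+1$ rank-$1$ symmetric summands, and the induced partial-sum path reaches $T$ in exactly $k+1$ adjacency steps. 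Combining both directions gives $d = \ad$ in the non-alternate cases and $d = \ad + 1$ in the alternate characteristic-$2$ case, as claimed.
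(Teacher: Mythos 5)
The paper never proves this lemma at all: it is quoted verbatim from Wan's book \cite{Wan96} (Proposition 5.5), so your argument has to stand on its own, and in structure it does. The easy direction by rank subadditivity along a path, the reduction to $S=\mathbf{0}_n$ via transitivity of $\gs_n(\F_q)$, and the odd-characteristic case by congruence-diagonalization are all correct. The two ideas carrying the characteristic-$2$ alternate case are also sound and pleasantly elementary: for the lower bound, writing a length-$r$ path as $T=VV^T$ and using the zero diagonal together with the Frobenius identity to force $V\mathbf{1}_r=0$, hence $\rank(T)\leq\rank(V)\leq r-1$; for the upper bound, the congruence $T\cong(I_{2m}+J_{2m})\oplus\mathbf{0}_{n-2m}$ (both sides being nondegenerate alternate of rank $2m$, padded by the radical) and the explicit decomposition $I_{2m}+J_{2m}=\sum_{i=1}^{2m}e_ie_i^T+\mathbf{1}\mathbf{1}^T$ into $2m+1$ rank-one symmetric summands. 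This makes the ``$+1$'' in the alternate case completely transparent, which the paper's bare citation does not.

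Two steps need repair, one of them a real (though fixable) gap. First, your parenthetical justification of the canonical form $I_k\oplus\mathbf{0}$ in the characteristic-$2$ non-alternate case does not work as written: Gauss-type clearing with a nonzero diagonal pivot can leave an \emph{alternate} complement, after which no further pivot exists. Concretely, $T=\mathrm{diag}(1)\oplus H$ with $H=\left(\begin{smallmatrix}0&1\\1&0\end{smallmatrix}\right)$ is non-alternate of rank $3$, and one clearing step stalls at $1\oplus H$. What you need is Albert's theorem (a non-alternate symmetric matrix over a field of characteristic $2$ is congruent to a diagonal matrix), whose proof requires the additional identity $\langle 1\rangle\perp H\cong I_3$ --- e.g.\ via the orthonormal basis $(1,1,1),(1,1,0),(1,0,1)$ --- before perfectness of $\F_q$ normalizes the diagonal to $I_k$. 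Invoking that standard fact (it is also in Wan's book) closes the gap. Second, your lower-bound argument tacitly assumes every step of a path from $\mathbf{0}_n$ to $T$ has the form $v_iv_i^T$; this is exactly where perfectness enters (a rank-one symmetric matrix is $\lambda uu^T$, and in characteristic $2$ every $\lambda$ is a square), and it should be stated, since it is precisely what fails in odd characteristic. Finally, note that both the statement and your proof implicitly assume $S\neq S'$ in the alternate case, since $\mathbf{0}_n$ is alternate yet $\ad(S,S)=d(S,S)=0$.
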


We will also use the following definitions and notations. Let $S$ be a point of $\bs_{n}(\F_q)$ and $\delta$ be a positive integer. The set $$ \{S'\in \bs_{n}(\F_q): 0<d(S,  S')\leq\delta\}$$ is called \emph{the deleted $\delta$ neighbourhood of the point $S$} in $\bs_{n}(\F_{q})$, and is denoted by $U(S, \delta)$. Let $\mathbb{A}$ be a subset of $\bs_{n}(\F_q)$. Define $U(\mathbb{A},\delta)$ as the intersection of all $U(S,\delta)$ with $S\in \mathbb{A}$, i.e.
$$
U(\mathbb{A}, \delta)= \bigcap_{S \in \mathbb{A}}U(S, \delta),
$$
and $U(\mathbb{A}, \delta)$ is called the \emph{common deleted $\delta$ neighbourhood} of $\mathbb{A}$. For simplicity, let $U(S) = U(S, 1)$, $U(\mathbb{A}) = U(\mathbb{A}, 1)$,
$$\left(\begin{array}{cccc} A &0\\0&B\end{array}\right) \textrm{\ is\ denoted\ by\ }\mathrm{diag}(A,B),
$$
$$
\left(\begin{array}{cccc} a&b\\b&c\end{array}\right)_n \quad \textrm{denotes the matrix} \quad
\left(\begin{array}{cccccc}
a & b & 0 &  \cdots & 0\\
b & c & 0 & \cdots & 0\\
0 & 0 & 0 & \cdots & 0\\
\vdots & \vdots & \vdots &  \vdots & \vdots \\
0 & 0 & 0 &  \cdots & 0 \end{array}\right)_{n\times n} ,
$$
$I_{ij}$ denotes the $n\times n$ matrix whose $(i,j)$-entry is 1 and all the other entries are $0$, $I_n$ is the identity matrix of order $n$, and ${\bf 0}_{n}$ is the zero matrix of order $n$.

In what follows, we present some basic results on the properties of points and other objects in $\bs_n(\F_q)$.

\begin{lemma}\label{lem-us}
  Let $S$ be a point of $\bs_n(\F_q)$, then $|U(S)|=q^n-1$.
\end{lemma}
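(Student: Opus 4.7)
The plan is to identify $U(S)$ with the set of symmetric rank-$1$ perturbations of $S$, use transitivity of the motion group to reduce to $S = \bzero_n$, and then count rank-$1$ symmetric matrices directly.

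First I would unwind the definitions: $U(S) = U(S,1) = \{S' \in \bs_n(\F_q) : d(S, S') = 1\}$, and $d(S, S') = 1$ precisely when $S$ and $S'$ are adjacent, i.e., when $\ad(S, S') = \rank(S - S') = 1$; the inequality $\ad \leq d$ from Lemma~\ref{lem-add} rules out any larger value of $\ad$. Since $\gs_n(\F_q)$ acts transitively on $\bs_n(\F_q)$ and preserves the arithmetic distance, $|U(S)|$ is independent of $S$, so it suffices to compute it for $S = \bzero_n$. Thus the problem reduces to counting
\[
R_1 = \{T \in \bs_n(\F_q) : \rank T = 1\}.
\]

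The key structural observation is that every $T \in R_1$ admits a representation $T = c\, v v^T$ for some $c \in \F_q^*$ and $v \in \F_q^n \setminus \{0\}$. Indeed, any rank-$1$ matrix has the form $u w^T$ with nonzero $u, w \in \F_q^n$; the symmetry condition $u w^T = w u^T$ then forces $u$ and $w$ to be proportional, so $T = \mu\, u u^T$ for a suitable $\mu \in \F_q^*$. This reasoning is characteristic-free and gives an explicit parametrization of $R_1$ by the set $\F_q^* \times (\F_q^n \setminus \{0\})$, of cardinality $(q-1)(q^n - 1)$.

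Finally I would finish with an orbit-size computation on that parameter space. Two pairs $(c_1, v_1)$ and $(c_2, v_2)$ yield the same matrix if and only if there exists $\lambda \in \F_q^*$ such that $v_2 = \lambda v_1$ and $c_2 \lambda^2 = c_1$, so each element of $R_1$ is produced by exactly $q - 1$ pairs. Dividing yields $|R_1| = (q-1)(q^n - 1)/(q - 1) = q^n - 1$, which is the claim. The main (though rather mild) obstacle I anticipate is verifying the fibre-size count uniformly in the characteristic: when $\mathrm{char}(\F_q) = 2$ the map $\lambda \mapsto \lambda^2$ is a bijection on $\F_q^*$ rather than $2$-to-$1$, yet the orbit still has size $q - 1$ because $\lambda$ itself varies freely, so the final tally is unaffected.
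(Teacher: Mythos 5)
Your proof is correct, and it takes a genuinely different route from the paper's at the counting step. Both arguments begin identically: use transitivity of $\gs_n(\F_q)$ and invariance of the arithmetic distance to reduce to counting rank-$1$ matrices in $\bs_n(\F_q)$, i.e.\ to computing $|U(\bzero_n)|$. From there the paper proceeds by induction on $n$, splitting according to whether the $(1,1)$-entry of $S'$ vanishes: if $s_{11}=0$ the matrix lives in $\bs_{n-1}(\F_q)$, and if $s_{11}\neq 0$ the first row determines everything, giving the recursion $(q^{n-1}-1)+(q-1)q^{n-1}=q^n-1$. You instead prove the structural fact that every symmetric rank-$1$ matrix equals $c\,vv^T$ with $c\in\F_q^*$ and $v\neq 0$ (the symmetry of $uw^T$ forces $u$ and $w$ to be proportional), and then count fibres of the map $(c,v)\mapsto c\,vv^T$: each fibre is parametrized freely by $\lambda\in\F_q^*$ via $(c\lambda^{-2},\lambda v)$, hence has size exactly $q-1$ independently of the characteristic, so $|R_1|=(q-1)(q^n-1)/(q-1)=q^n-1$. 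Your fibre computation is sound — in particular your remark that the $2$-to-$1$ versus bijective behaviour of $\lambda\mapsto\lambda^2$ is irrelevant is exactly right, since $\lambda$ itself indexes the fibre. What each approach buys: the paper's induction is entirely elementary and its case split on $s_{11}$ is recycled almost verbatim in the proof of Lemma~\ref{lem-rank2}, so it earns its keep later in the paper; your parametrization is shorter, characteristic-free in a single stroke, and exposes the useful structural description of $U(\bzero_n)$ as $\{c\,vv^T\}$, which would also immediately identify the lines through $\bzero_n$ used in Lemma~\ref{lem-rank2} (they are the sets $\{x\,vv^T : x\in\F_q\}$ up to the char-$2$ caveat that non-square multipliers must be retained in odd characteristic).
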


\begin{proof}
  Since $\gs_n(\F_q)$ acts transitively on $\bs_n(\F_q)$ and keeps invariant the arithmetic distance between any two points of $\bs_n(\F_q)$, there exists an element $\sigma$ of $\gs_n(\F_q)$ such that $\sigma(S) = {\bf 0}_n$ and $|U(S)|=|U({\bf 0}_n)|$. By definition, we have
  $$
  U({\bf 0}_n)=\{S' \in \bs_n(\F_q): 0 < d({\bf 0}_n,  S') \leq 1 \} = \{ S'\in \bs_n (\F_q): \rank (S') = 1 \}.
  $$
  Hence it suffices to show that the number of matrices of rank $1$ in $\bs_n(\F_q)$ is $q^n-1$.

  We now prove this by induction as follows. When $n = 1$, $U({\bf 0}_1)=\{(s) : s \in \F_q^{*}\}$ and the lemma holds trivially. When $n = 2$, let $S' \in \bs_2(\F_q)$ be a matrix of rank 1. Write $S'$ in the form of
  $$
  \left(\begin{array}{cc} s_{11} & s_{12}\\
    s_{12} & s_{22}
  \end{array}\right),
  $$
  where $s_{11}, s_{12}, s_{22} \in \F_q$. If $s_{11}=0$, then $s_{12}=0$ as $\rank(S')=1$. In this case, $S'$ can be regarded as a rank-$1$ matrix of $\bs_1(\F_q)$. Since $|U({\bf 0}_1)|=q-1$, it follows that there are $q-1$ different choices for $S'$. If $s_{11} \neq 0$, then the second row is an $s_{11}^{-1} s_{12}$ multiple of the first row and $s_{22} = s_{11}^{-1}s_{12}s_{12}$. Therefore, $S'$ depends only on the choices of $s_{11}$ and $s_{12}$. It follows that
  there are $q(q-1)$ different choices for $S'$ in this case. Thus, we conclude that $|U({\bf 0}_2)| = (q-1)+q(q-1) = q^{2}-1$.

  Assume now that the lemma holds for $n-1$ with $n \geq 3$, i.e., $|U({\bf 0}_{n-1})|=q^{n-1}-1$. Let $S' = (s_{ij})_{n\times n}\in \bs_n(\F_q)$ be a matrix of rank 1. If $s_{11}=0$, then $s_{1i}=0$ for all $i=2, \ldots, n$ and $S^{'}$ can then be regarded as a rank-$1$ matrix of $\bs_{n-1}(\F_q)$. Hence by induction, there are $q^{n-1}-1$ different choices for $S'$ in this case. If $s_{11}\neq 0$, then $s_{ij} = s_{11}^{-1}s_{1i}s_{1j}$ for all $1\leq i,j \leq n$,
  which implies that $S'$ depends only on the choices of $s_{11}, \ldots, s_{1n}$. Then one can see that $S'$ has $(q-1)q^{n-1}$ different choices in this case. Therefore $|U({\bf 0}_n)|=(q^{n-1}-1)+(q-1)q^{n-1}=q^n-1$. This completes the proof.

\end{proof}

We now define maximal sets of rank $1$, which also correspond to vertices (the check nodes) in the bipartite graph that we will construct later, as the same as the points of $\bs_n(\F_q)$ (the message nodes).

\begin{definition}\cite[Definition 5.3]{Wan96}\label{def-ms}
  A subset $\mathcal{M}$ of $\bs_n(\F_q)$ is called a {\em maximal set of rank $1$}, if any two points of $\mathcal{M}$ are adjacent and no point in $\bs_n(\F_q)\setminus \mathcal{M}$ is adjacent to each point of $\mathcal{M}$.
\end{definition}

Clearly, a maximal set of rank $1$ is again a maximal set of rank $1$ under actions of $\gs_n(\F_q)$. The maximal set of rank $1$ can also be interpreted in the language of graphs. Let $\Gamma(n,q)$ be an undirected graph associated with $\bs_n(\F_q)$ as its vertex set. Two vertices $S$ and $S'$ of $\Gamma(n,q)$ are adjacent if and only if $\rank (S-S')=1$. Then the vertex set of a maximal clique in the graph $\Gamma(n,q)$ corresponds to a maximal set of rank $1$ of $\bs_n(\F_q)$.

The following basic results are needed in the sequel.
\begin{proposition}\label{prop-m1}~\cite[Proposition 5.8]{Wan96}
  $\mathcal{M}_1=\{x I_{11}: x \in \F_q$\} is a maximal set of rank $1$. Moreover, any maximal set of rank 1 can be transformed to $\mathcal{M}_1$ under $\gs_n(\F_q)$.
\end{proposition}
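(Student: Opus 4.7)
The plan is to prove the two assertions separately: first that $\mathcal{M}_1$ satisfies Definition~\ref{def-ms}, and second that the motion group $\gs_n(\F_q)$ acts transitively on the collection of all maximal sets of rank $1$.

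For the first claim, pairwise adjacency inside $\mathcal{M}_1$ is immediate, since for $x\neq y$ the difference $(x-y)I_{11}$ has rank $1$. For maximality, I would take an arbitrary $S\in \bs_n(\F_q)$ adjacent to every point of $\mathcal{M}_1$, so that $\rank(S-xI_{11})=1$ for all $x\in\F_q$. Setting $x=0$ already yields $\rank S\leq 1$. Writing $S=(s_{ij})$ and using the symmetry $s_{ji}=s_{ij}$, I would then expand a generic $2\times 2$ minor of $S-xI_{11}$ taken on rows $1,i$ and columns $1,j$ with $i,j\geq 2$, obtaining an expression linear in $x$ that must vanish identically; evaluating at two distinct field elements and subtracting kills the $x$-dependence and forces $s_{ij}=0$ for all $i,j\geq 2$. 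Feeding this back into the same minor forces $s_{1i}s_{1j}=0$ for $i,j\geq 2$; taking $i=j$ gives $s_{1i}^2=0$ and hence $s_{1i}=0$. Therefore $S=s_{11}I_{11}\in\mathcal{M}_1$, which proves maximality.

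For the second claim, let $\mathcal{M}$ be an arbitrary maximal set of rank $1$. By transitivity of $\gs_n(\F_q)$ on $\bs_n(\F_q)$, I may translate so that $\bzero_n\in \mathcal{M}$, after which every other point of $\mathcal{M}$ has rank $1$. The key auxiliary fact I would use is that a symmetric rank-$1$ matrix over $\F_q$ can always be written as $c\,vv^T$ for some $c\in\F_q^*$ and nonzero column vector $v$: this follows from writing the matrix as $uw^T$ and imposing $uw^T=wu^T$ entrywise, and it is valid in every characteristic. Picking any nonzero $S_1\in\mathcal{M}$ and writing $S_1=c\,vv^T$, I would choose $P\in\gl_n(\F_q)$ with $P^Tv=e_1$ (extend $v$ to a basis and invert), so that the cogredient map $X\mapsto P^TXP$ belongs to $\gs_n(\F_q)$, fixes $\bzero_n$, and sends $S_1$ to $cI_{11}$. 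Every remaining $S\in \mathcal{M}$ then has $\rank S\leq 1$ and $\rank(S-cI_{11})\leq 1$, so the same $2\times 2$ minor argument from the first part (now with parameter $c$ instead of varying $x$, using $c\neq 0$) forces $S\in\mathcal{M}_1$. Hence $\mathcal{M}\subseteq\mathcal{M}_1$ after the transformation, and the maximality of $\mathcal{M}$ upgrades this containment to equality.

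The only slightly delicate step is the $2\times 2$ minor calculation, and in particular its reuse in the second part. In the first part the conclusion comes from varying $x$ over two distinct elements of $\F_q$, whereas in the second part only the single value $c$ is available; this is why I first need to deduce $\rank S\leq 1$ independently (from $\bzero_n\in\mathcal{M}$) before applying the minor identity at $x=c$. Once both rank constraints are in place, the factorisation $S=buu^T$ turns the relevant minor into $-cb\,u_iu_j$, whose vanishing for $i,j\geq 2$ forces $u=u_1e_1$ and hence $S\in\mathcal{M}_1$. No case split on the characteristic of $\F_q$ is needed.
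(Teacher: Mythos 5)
The paper gives no proof of this proposition at all: it is imported wholesale from Wan's book (\cite[Proposition 5.8]{Wan96}), so there is no in-paper argument to compare yours against, and the only question is whether your blind proof stands on its own. It does. For maximality of $\mathcal{M}_1$, the $2\times 2$ minor of $S-xI_{11}$ on rows $\{1,i\}$ and columns $\{1,j\}$ with $i,j\geq 2$ equals $(s_{11}-x)s_{ij}-s_{1i}s_{1j}$, and its vanishing at two distinct values of $x$ indeed forces $s_{ij}=0$ and then $s_{1i}^2=0$, hence $S=s_{11}I_{11}$. Your auxiliary fact is also valid in every characteristic: writing a symmetric rank-one $S$ as $uw^{T}$ and imposing symmetry makes $u$ and $w$ proportional, so $S=c\,vv^{T}$ with $c\neq 0$ (equivalently, no alternate matrix has rank one); from there the normalization $P^{T}v=e_1$, the identity that the relevant minor of $S-cI_{11}$ equals $-cb\,u_iu_j$ when $S=buu^{T}$, and the final upgrade of $\mathcal{M}\subseteq\mathcal{M}_1$ to equality via the second clause of Definition~\ref{def-ms} are all correct. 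Only two trivial points are left implicit: a maximal set of rank $1$ must contain at least two points (a singleton $\{S_0\}$ is ruled out because points adjacent to $S_0$ exist, e.g.\ by Lemma~\ref{lem-us}), which you need before picking a nonzero $S_1\in\mathcal{M}$ after translation; and the fact, stated in the paper just before the proposition, that the image of a maximal set of rank $1$ under $\gs_n(\F_q)$ is again a maximal set of rank $1$. The trade-off relative to the paper's approach is clear: citing \cite{Wan96} keeps the paper short, whereas your argument makes the result self-contained and elementary, with the added observation that no characteristic-$2$ case split is needed.
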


\begin{lemma}\label{lem-s1s2}~\cite[Lemma 5.9]{Wan96}
Let $S_1, S_2$ be two points in $\bs_n(\F_q)$ with $\ad(S_1,S_2)=1$. Then there exists a unique maximal set of rank $1$ containing both $S_1$ and $S_2$, which is the set of matrices
  $$
  \{ xS_1+(1-x)S_2 : x\in \F_q \}  = \{ S_2 + x(S_1 - S_2): x \in \F_q \}.
  $$
\end{lemma}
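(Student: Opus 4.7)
The plan is to combine Proposition~\ref{prop-m1} (every maximal set of rank $1$ is equivalent under $\gs_n(\F_q)$ to the canonical $\mathcal{M}_1=\{xI_{11}:x\in\F_q\}$) with the affine character of the motions $X\mapsto P^{T}XP+T$, reducing the whole statement to a trivial verification in the canonical case.

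First I would dispose of the easy direction: the set $\mathcal{L}:=\{S_2+x(S_1-S_2):x\in\F_q\}$ has exactly $q$ elements, contains $S_1$ (at $x=1$) and $S_2$ (at $x=0$), and is pairwise adjacent, because the difference of two distinct members is $(x-y)(S_1-S_2)$, which has rank exactly $1$. Hence $\mathcal{L}$ is contained in at least one maximal set of rank $1$ (obtained by extending any clique in $\Gamma(n,q)$), so maximal sets containing the pair $\{S_1,S_2\}$ exist in the first place.

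The heart of the argument is the uniqueness. Let $\mathcal{M}$ be any maximal set of rank $1$ containing $S_1$ and $S_2$. By Proposition~\ref{prop-m1} pick $\sigma\in\gs_n(\F_q)$ with $\sigma(\mathcal{M})=\mathcal{M}_1$, and write $\sigma(S_i)=x_iI_{11}$ for $i=1,2$; the $x_i$ are distinct since $\sigma$ is a bijection and $S_1\neq S_2$. A one-line computation (the translation part of $\sigma$ cancels because the affine weights sum to $1$) gives
$$\sigma\bigl(xS_1+(1-x)S_2\bigr)=x\sigma(S_1)+(1-x)\sigma(S_2)=\bigl(x_2+x(x_1-x_2)\bigr)I_{11}.$$
As $x$ runs over $\F_q$, the coefficient $x_2+x(x_1-x_2)$ runs over all of $\F_q$ (using $x_1\neq x_2$), so $\sigma(\mathcal{L})=\mathcal{M}_1=\sigma(\mathcal{M})$, and applying $\sigma^{-1}$ yields $\mathcal{L}=\mathcal{M}$. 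Since $\mathcal{M}$ was arbitrary, $\mathcal{L}$ is both maximal and the unique maximal set of rank $1$ containing $S_1$ and $S_2$.

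I do not foresee a real obstacle: the proof is a reduction to the canonical $\mathcal{M}_1$ via the transitive action of $\gs_n(\F_q)$, and the only step deserving a moment of care is verifying that a motion $X\mapsto P^{T}XP+T$ preserves affine combinations whose coefficients sum to $1$, which is immediate once written down.
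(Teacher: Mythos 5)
The paper offers no proof of this lemma at all---it is quoted directly from \cite[Lemma 5.9]{Wan96}---so the only question is whether your argument stands on its own, and it does. The existence step is sound: $\mathcal{L}=\{S_2+x(S_1-S_2):x\in\F_q\}$ is a $q$-element clique of $\Gamma(n,q)$, since the difference of two distinct members is a nonzero scalar multiple of the rank-one matrix $S_1-S_2$, and in a finite graph every clique extends to a maximal clique, which by Definition~\ref{def-ms} is exactly a maximal set of rank $1$. The uniqueness step is also sound: a motion $X\mapsto P^{T}XP+T$ is affine, so it preserves combinations $xS_1+(1-x)S_2$ whose coefficients sum to $1$; hence if $\sigma(\mathcal{M})=\mathcal{M}_1$ with $S_1,S_2\in\mathcal{M}$ and $\sigma(S_i)=x_iI_{11}$, $x_1\neq x_2$, then $\sigma(\mathcal{L})=\bigl\{\bigl(x_2+x(x_1-x_2)\bigr)I_{11}:x\in\F_q\bigr\}=\mathcal{M}_1=\sigma(\mathcal{M})$, and injectivity of $\sigma$ gives $\mathcal{L}=\mathcal{M}$; nothing here depends on the characteristic. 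The one structural caveat is that your proof rests entirely on the classification in Proposition~\ref{prop-m1} (every maximal set of rank $1$ is a $\gs_n(\F_q)$-image of $\mathcal{M}_1$), which the paper likewise cites from \cite{Wan96} without proof. That dependence is legitimate---Proposition~\ref{prop-m1} precedes the lemma both in this paper and in Wan's book---but it means you have derived one quoted result from a stronger quoted one rather than given an independent argument; a fully self-contained proof would instead normalize $S_1=\mathbf{0}_n$ and $S_2=aI_{11}$ by a motion and analyze directly which rank-one points can be adjacent to every point of $\mathcal{L}$.
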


\begin{corollary}\label{coro-inter}~\cite[Corollary 5.10]{Wan96}
  If there are two distinct maximal sets of rank $1$ whose intersection is nonempty, then their intersection contains a single point in $\bs_n(\F_q)$.
\end{corollary}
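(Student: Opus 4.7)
The plan is to argue by contradiction, using the uniqueness statement of Lemma~\ref{lem-s1s2} as the essential input. Let $\mathcal{M}$ and $\mathcal{M}'$ be two distinct maximal sets of rank $1$ in $\bs_n(\F_q)$ with $\mathcal{M}\cap\mathcal{M}'\neq\emptyset$. I would suppose for contradiction that the intersection contains at least two distinct points $S_1$ and $S_2$, and then derive $\mathcal{M}=\mathcal{M}'$.

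First, I would observe that since $S_1,S_2\in\mathcal{M}$ and $\mathcal{M}$ is a maximal set of rank $1$, any two of its points are adjacent by Definition~\ref{def-ms}; in particular $\ad(S_1,S_2)=1$. Next, I would invoke Lemma~\ref{lem-s1s2}, which asserts that there is a \emph{unique} maximal set of rank $1$ containing the two adjacent points $S_1$ and $S_2$, namely the line $\{xS_1+(1-x)S_2:x\in\F_q\}$. Since both $\mathcal{M}$ and $\mathcal{M}'$ are maximal sets of rank $1$ containing $S_1$ and $S_2$, the uniqueness forces $\mathcal{M}=\mathcal{M}'$, contradicting the hypothesis that the two sets are distinct.

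Hence $|\mathcal{M}\cap\mathcal{M}'|\le 1$, and combined with the nonemptiness hypothesis this yields $|\mathcal{M}\cap\mathcal{M}'|=1$, completing the argument. There is no real obstacle here: the entire content of the corollary is already packaged into the uniqueness clause of Lemma~\ref{lem-s1s2}, so the only care required is to justify that two points lying in a common maximal set of rank $1$ are automatically adjacent, which is immediate from the definition. No case analysis on the characteristic of $\F_q$ or on the alternating/non-alternating distinction from Lemma~\ref{lem-add} is needed, because the argument works entirely at the level of arithmetic distance.
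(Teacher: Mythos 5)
Your proof is correct: the corollary is indeed an immediate consequence of the uniqueness clause in Lemma~\ref{lem-s1s2}, combined with the fact that any two points of a maximal set of rank $1$ are adjacent by Definition~\ref{def-ms}. The paper itself gives no proof (it cites \cite[Corollary 5.10]{Wan96}), but your argument is exactly the standard derivation used in that reference, so there is nothing to add.
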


\begin{lemma}\label{lem-maxset}
  For each $S \in \bs_n(\F_q)$, there are $\frac{q^{n}-1}{q-1}$ maximal sets of rank $1$ containing $S$.
\end{lemma}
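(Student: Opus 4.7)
The plan is to use transitivity of the motion group to reduce to a single point, then count maximal sets of rank $1$ through that point by examining how they partition the deleted $1$-neighbourhood.

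First I would invoke the transitivity of $\gs_n(\F_q)$ on $\bs_n(\F_q)$. Since every element of $\gs_n(\F_q)$ preserves the arithmetic distance and hence sends maximal sets of rank $1$ to maximal sets of rank $1$, the number of maximal sets containing $S$ equals the number containing $\bzero_n$. Thus it suffices to prove the claim for $S = \bzero_n$.

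Next I would use Lemma~\ref{lem-s1s2} with $S_1 = \bzero_n$ and $S_2 = S'$ for an arbitrary $S' \in U(\bzero_n)$: the unique maximal set of rank $1$ containing both $\bzero_n$ and $S'$ is
$$
\mathcal{M}(S') = \{ x \bzero_n + (1-x) S' : x \in \F_q \} = \{ y S' : y \in \F_q \}.
$$
Every maximal set of rank $1$ containing $\bzero_n$ arises in this way, since any such set must contain at least one point other than $\bzero_n$ (any two of its points are adjacent, so in particular some $S' \neq \bzero_n$ lies in it with $\ad(\bzero_n,S')=1$, i.e., $S' \in U(\bzero_n)$).

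The final step is a double-count. The $q-1$ nonzero scalar multiples $yS'$ ($y \in \F_q^\ast$) all lie in $U(\bzero_n)$, and by Corollary~\ref{coro-inter} together with the uniqueness in Lemma~\ref{lem-s1s2}, each point of $U(\bzero_n)$ belongs to exactly one maximal set of rank $1$ through $\bzero_n$. Hence the collection of such maximal sets, after removing the common point $\bzero_n$, partitions $U(\bzero_n)$ into blocks of size $q-1$. Combining this with $|U(\bzero_n)| = q^n - 1$ from Lemma~\ref{lem-us} gives exactly $\tfrac{q^n-1}{q-1}$ maximal sets of rank $1$ containing $\bzero_n$, completing the proof.

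The main obstacle is quite mild: one only needs to make sure the sets $\mathcal{M}(S')$ and $\mathcal{M}(S'')$ coincide precisely when $S''$ is a nonzero scalar multiple of $S'$, which is immediate from the uniqueness assertion in Lemma~\ref{lem-s1s2}. No heavy calculation is required; the argument is essentially bookkeeping on top of the already-established lemmas.
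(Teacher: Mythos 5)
Your proof is correct and follows essentially the same route as the paper: both count the points of $U(S)$ (of size $q^n-1$ by Lemma~\ref{lem-us}) and use the uniqueness in Lemma~\ref{lem-s1s2} to see that each maximal set of rank $1$ through $S$ accounts for exactly $q-1$ of them. Your preliminary reduction to $S=\bzero_n$ via transitivity of $\gs_n(\F_q)$ is harmless but unnecessary, since the paper's argument (and yours) works verbatim at an arbitrary point $S$.
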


\begin{proof}
  By Lemma~\ref{lem-s1s2}, to uniquely determine a maximal set of rank $1$ containing $S$, we need to choose another point from $U(S)$, and there are $q^{n}-1$ possible choices from Lemma~\ref{lem-us}. Meanwhile, two different points, $S_{1}$ and $S_{2}$, give the same maximal set of rank 1 if and only if $S_{2}=xS_{1}+(1-x)S$ for some $x\in \F_{q}^{*}$. Thus, there are $\frac{q^{n}-1}{q-1}$ mutually distinct maximal sets of rank $1$ containing $S$.
\end{proof}

\begin{lemma}\label{lem-maxsetall}
There are $\frac{q^n-1}{q-1}q^{\frac{n^2+n-2}{2}}$ maximal sets of rank $1$ of $\bs_n(\F_q)$.
\end{lemma}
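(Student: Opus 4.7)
The plan is to count the number of maximal sets of rank $1$ by a standard double counting argument on incidences between points and maximal sets of rank $1$. Let $N$ denote the quantity we want to compute, and consider the set
$$
\mathcal{I} = \{ (S, \mathcal{M}) : S \in \bs_n(\F_q),\ \mathcal{M} \text{ is a maximal set of rank } 1,\ S \in \mathcal{M} \}.
$$
I will count $|\mathcal{I}|$ in two ways.

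First I would fix the point $S$ and sum over maximal sets of rank $1$ containing it. By Lemma~\ref{lem-maxset}, each of the $q^{\frac{n(n+1)}{2}}$ points of $\bs_n(\F_q)$ lies in exactly $\frac{q^n-1}{q-1}$ such maximal sets, so
$$
|\mathcal{I}| = q^{\frac{n(n+1)}{2}} \cdot \frac{q^n-1}{q-1}.
$$
Next I would fix the maximal set $\mathcal{M}$ and sum over its points. By Proposition~\ref{prop-m1}, every maximal set of rank $1$ is equivalent under $\gs_n(\F_q)$ to $\mathcal{M}_1 = \{ x I_{11} : x \in \F_q \}$, which contains exactly $q$ points; since the action of $\gs_n(\F_q)$ preserves incidences, every maximal set of rank $1$ contains exactly $q$ points. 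Thus
$$
|\mathcal{I}| = N \cdot q.
$$

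Equating the two expressions gives $N = \frac{q^n-1}{q-1} \cdot q^{\frac{n(n+1)}{2} - 1} = \frac{q^n-1}{q-1} \cdot q^{\frac{n^2+n-2}{2}}$, as desired. There is really no main obstacle here: the only nontrivial input is the fact that every maximal set of rank $1$ has exactly $q$ points, which follows immediately from the transitivity statement in Proposition~\ref{prop-m1} applied to the specific set $\mathcal{M}_1$. The rest is arithmetic.
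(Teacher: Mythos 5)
Your proof is correct, and it is a genuinely different (though closely related) counting argument from the paper's. The paper counts ordered pairs of adjacent points: there are $q^{\frac{n(n+1)}{2}}(q^n-1)$ such pairs by Lemma~\ref{lem-us}, each pair lies on a unique maximal set of rank $1$ by Lemma~\ref{lem-s1s2}, and each maximal set arises from exactly $q(q-1)$ ordered pairs of its points, so dividing gives the formula. You instead double count point--line incidences, combining Lemma~\ref{lem-maxset} (each point lies on exactly $\frac{q^n-1}{q-1}$ maximal sets) with the fact that each maximal set contains exactly $q$ points, which you correctly justify via Proposition~\ref{prop-m1}: the elements of $\gs_n(\F_q)$ are bijections of $\bs_n(\F_q)$, so every maximal set has the same cardinality as $\mathcal{M}_1=\{xI_{11}: x\in\F_q\}$, namely $q$. (The same fact also follows directly from the parametrization $\{xS_1+(1-x)S_2 : x\in\F_q\}$ in Lemma~\ref{lem-s1s2}.) The two computations are arithmetically equivalent, and indeed the paper's proof of Lemma~\ref{lem-maxset} is essentially the ``fix a point'' half of the paper's pair count; your route is leaner because it reuses that already-established lemma instead of re-running the quotient-by-reparametrization argument, at the small price of having to supply the cardinality $|\mathcal{M}|=q$ as a separate (but immediate) observation. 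Both arguments are complete and yield $\frac{q^n-1}{q-1}\,q^{\frac{n^2+n-2}{2}}$.
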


\begin{proof}
  By Lemma~\ref{lem-s1s2}, we need to choose two adjacent points, $S_1$ and $S_2$, of $\bs_n(\F_q)$. For $S_1$, there are $q^{\frac{n(n+1)}{2}}$ possible choices, then $S_2$ has $q^n-1$ possible choices from $U(S_1)$. Two choices, $(S_1, S_2)$ and $(S_1', S_2')$ give the same maximal set of rank $1$ if and only if $S_1'$ and $S_2'$ are both contained in the maximal set of rank $1$, determined by $S_1$ and $S_2$. This means that $S_1' = x_1 S_1 + (1-x_1) S_2$ and $S_2' = x_2 S_1 + (1 - x_2) S_2$ with $x_1, x_2 \in \F_q$ and $x_1 \ne x_2$. There are totally $q(q-1)$ possibilities for the choices of $x_1, x_2$. Therefore, there are $\frac{q^n-1}{q-1}q^{\frac{n^2 + n - 2}{2}}$ maximal sets of rank $1$ of $\bs_n(\F_q)$.
\end{proof}

\section{The construction of $\C(n,q)$ and $\C^T(n,q)$}\label{sec-const}

We now construct a new bipartite graph $G(n, q)$,  whose vertex set contains two parts: a point set $V(n,q) = \bs_n (\F_q)$ and a line set $L(n, q)$, i.e., the set of all the maximal sets of rank $1$ of $\bs_n(\F_q)$. A point $v \in V(n,q)$ is adjacent to a line $\ell \in L(n,q)$ if and only if the point $v$ is contained in the line $\ell$, which we denote by $v \sim \ell$. Let $H(n, q)$ be the adjacent matrix of the bipartite graph $G(n, q)$,  whose rows and
columns correspond to the lines and points of $G(n, q)$, respectively. Hence, $H(n, q)$ has $r = \frac{q^n-1}{q-1}q^{\frac{n^{2}+n-2}{2}}$ rows (by Lemma~\ref{lem-maxsetall}) and $c = q^{\frac{n(n+1)}{2}}$ columns, and it is not difficult to prove the following four structural properties:
 \begin{enumerate}
   \item[(i)] each row consists of $\rho=q$ ``ones'' (by Proposition~\ref{prop-m1});
   \item[(ii)] each column consists of $\gamma=\frac{q^{n}-1}{q-1}$ ``ones'' (by Lemma~\ref{lem-maxset});
   \item[(iii)] the number of ``ones'' in common between any two columns (rows), denoted by $\lambda$, is not greater than $1$ (by Lemma~\ref{lem-s1s2} and Corollary~\ref{coro-inter});
   \item[(iv)] both $\rho$ and $\gamma$ are small compared to the number of columns $c$ and the number of rows $r$ in $H(n,q)$, respectively.
 \end{enumerate}
 Hence the null spaces of $H(n,q)$ and its transpose $H^T(n,q)$ define two regular binary LDPC codes, denoted by $\C(n,q)$ and $\C^T(n,q)$, respectively. Based on the basic properties of points in $\bs_n(\F_q)$ we derived in Section~\ref{sec-pre}, we now investigate some properties of the bipartite graph $G(n,q)$.

 \begin{lemma}\label{lem-rank2}
   Let $S_1$, $S_2$ be two non-zero matrices of $\bs_n(\F_q)$, which are adjacent to two distinct lines passing through ${\bf 0}_{n}$, respectively. Then $\rank (S_1-S_2)=2$.
\end{lemma}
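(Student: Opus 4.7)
The plan is to reduce the claim to a $2\times 2$ rank computation after writing each of $S_1, S_2$ in the canonical form of a rank-$1$ symmetric matrix. Since both $\mathbf{0}_n$ and $S_i$ lie on the maximal set of rank $1$, $\ell_i$, they are adjacent in $\bs_n(\F_q)$, forcing $\rank(S_i) = \rank(S_i - \mathbf{0}_n) = 1$; Lemma~\ref{lem-s1s2} then yields the explicit form $\ell_i = \{x S_i : x \in \F_q\}$ for $i=1,2$. Consequently, the hypothesis $\ell_1 \ne \ell_2$ is equivalent to saying that $S_2$ is not an $\F_q$-scalar multiple of $S_1$.

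Next I would invoke the canonical factorization $S_i = \alpha_i v_i v_i^{T}$, where $\alpha_i \in \F_q \setminus \{0\}$ and $v_i$ is a nonzero column vector in $\F_q^{n}$. This is obtained by writing $S_i = u_i w_i^{T}$ from its column-space description and then using $u_i w_i^{T} = S_i = S_i^{T} = w_i u_i^{T}$, which forces $u_i$ and $w_i$ to be proportional by a componentwise comparison. From these factorizations it follows that if $v_1, v_2$ were linearly dependent, then $S_2$ would be a scalar multiple of $S_1$, contradicting $\ell_1 \ne \ell_2$; hence $v_1, v_2$ are linearly independent.

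The final step is the matrix identity
\[
S_1 - S_2 \;=\; \bigl[\, v_1 \;\; v_2 \,\bigr]\, \mathrm{diag}(\alpha_1, -\alpha_2)\, \bigl[\, v_1 \;\; v_2 \,\bigr]^{T}.
\]
The $n \times 2$ outer factor has rank $2$ because $v_1, v_2$ are linearly independent, and the middle $2 \times 2$ diagonal matrix is nonsingular because $\alpha_1, \alpha_2 \ne 0$ (with $-\alpha_2 \ne 0$ holding even in characteristic $2$); therefore $\rank(S_1 - S_2) = 2$, as required. The only delicate point along the way is the existence of the factorization $S = \alpha v v^{T}$ in characteristic $2$, where one has to rule out the possibility that $S$ is alternate; this is automatic because alternate matrices have even rank, so a rank-$1$ symmetric matrix is never alternate. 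Everything else reduces to basic linear algebra and to the results already established in Section~\ref{sec-pre}.
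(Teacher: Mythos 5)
Your proof is correct, and it takes a genuinely different route from the paper's. The paper argues by induction on $n$: the base case $n=2$ is settled by writing out the two possible parametric forms of lines through $\mathbf{0}_2$ and computing $2\times 2$ determinants, and the inductive step splits into three cases according to whether the $(1,1)$-entries of $S_1,S_2$ vanish, exploiting the entrywise identities $a_{ij}=a_{11}^{-1}a_{1i}a_{1j}$ satisfied by rank-one symmetric matrices. You replace all of this by the outer-product normal form $S_i=\alpha_i v_iv_i^{T}$, after which the whole lemma collapses to the single identity $S_1-S_2=\bigl[\,v_1\;v_2\,\bigr]\,\mathrm{diag}(\alpha_1,-\alpha_2)\,\bigl[\,v_1\;v_2\,\bigr]^{T}$: distinctness of the two lines (via Lemma~\ref{lem-s1s2}, which indeed gives $\ell_i=\{xS_i:x\in\F_q\}$) is exactly linear independence of $v_1,v_2$, and then $\rank(S_1-S_2)=2$ because $\rank(VDV^{T})=\rank(DV^{T})=\rank(V^{T})=2$ whenever $V$ has full column rank and $D$ is invertible --- a step you state tersely but which is standard linear algebra. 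Your treatment of characteristic $2$ is also sound, and in fact doubly so: the componentwise comparison $u_iw_j=w_iu_j$ forcing $w$ proportional to $u$ is characteristic-free, and independently the parity-of-rank argument shows a rank-one symmetric matrix can never be alternate, so the factorization $S=\alpha vv^{T}$ always exists. The trade-off between the two proofs: yours is shorter, uniform in $n$ and in the characteristic, avoids induction and case analysis entirely, and isolates the structural reason the lemma holds (two independent rank-one directions cannot cancel down to rank one); the paper's induction is more pedestrian but stays entirely within the elementary entrywise computations already set up in the proof of Lemma~\ref{lem-us}, so it requires no factorization lemma beyond what that proof established.
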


\begin{proof}
  On one hand, $S_1$ and $S_2$ are two different matrices, then $\rank (S_1 - S_2)\geq 1$; on the other hand, by rank metric we have
  $$
  \rank (S_{1}-S_{2})\leq \rank (S_{1})+\rank (S_{2})=2 .
  $$
  Thus $1 \leq \rank (S_1 - S_2) \leq 2$. We prove $\rank (S_1-S_2) = 2$ by induction on $n$.

  When $n=2$, the lines passing through $\mathbf{0}_2$ can be represented in either of the following two forms:
  \begin{eqnarray*}
  & & \left\{ x \left(\begin{array}{cc}
    1 & y \\
    y & y^2
  \end{array}\right) : x\in \F_q \right\} \quad \textrm{for $y\in \F_q$}, \\
\textrm{and}  & & \left\{ x \left(\begin{array}{cc}
    0 & 0 \\
    0 & 1
  \end{array}\right) : x\in \F_q\right\}.
\end{eqnarray*}

We discuss on the following two possible cases:

\noindent Case 1: Both $S_{1}$ and $S_{2}$ are points of the first form, then we can assume that
  $$
  S_1 = x_1 \left(\begin{array}{cc} 1 & y_{1} \\
    y_1 & y_1^2
  \end{array}\right),\quad
  S_2 = x_2  \left(\begin{array}{cc} 1 & y_2 \\
    y_2 & y_2^2
  \end{array}\right),
  $$
  where $x_1,  x_2\in \F_q^*$,  $y_1, y_2\in \F_q$ and $y_1 \neq y_2$ since $S_1, S_2$ lie on two different lines. Then we have
  $$\left|x_1  \left(\begin{array}{cc} 1 & y_1 \\
    y_1  &  y_1^2
  \end{array}\right)
  - x_{2} \left(\begin{array}{cc} 1 & y_2  \\
    y_2  &  y_2^2
  \end{array}\right)\right|
  = - x_1 x_2 (y_1 - y_2)^2 \neq 0 ,
  $$
which implies that $\rank (S_1 - S_2)=2$.

\noindent Case 2: Exactly one of $S_1$ and $S_2$ is of the first form, and the other is of the second form. Without loss of generality, we may assume that
    $$
    S_1 = x_1 \left(\begin{array}{cc}
      1 & y \\
      y & y^2
    \end{array}\right), \quad
    S_2 = x_2 \left(\begin{array}{cc}
      0 & 0 \\
      0 & 1
    \end{array}\right),
    $$
  where $x_1, x_2 \in \F_q^*$, $y \in \F_q$. In this case, we get
  $$
  \left| x_1 \left(\begin{array}{cc}
    1 & y \\
    y & y^2
  \end{array}\right) - x_2 \left(\begin{array}{cc} 0 & 0 \\
    0 & 1
  \end{array}\right)\right| = - x_1 x_2 \neq 0. $$
  It then follows that $\rank (S_1 - S_2) = 2$.

Now assume that the conclusion holds for $n-1$. Let $S_1 = (a_{ij})_{n\times n}$, and $S_2 = (b_{ij})_{n\times n}$, where $a_{ij},   b_{ij} \in \F_q$, $a_{ij} = a_{ji}$, and $b_{ij}=b_{ji}$ for $1\leq i\leq j\leq n$. Clearly, we have $\rank(S_1) = \rank(S_2) = 1$. As discussed in the proof of Lemma~\ref{lem-us}, if $a_{11}=0$, we have $a_{1i}=0$ for $1 \leq i \leq n$, then $S_1$ is regarded as a rank-$1$ matrix of $\bs_{n-1}(\F_q)$; if $a_{11}\neq 0$, then $a_{ij} = a_{11}^{-1}a_{1i}a_{1j}$ for $1 \leq i \leq j \leq n$. We discuss on the following three cases:

\noindent Case 1: If $a_{11} = b_{11} = 0$, $S_1$ and $S_2$ can be regarded as two rank-1 matrices of $\bs_{n-1} (\F_q)$ and lie on two different lines passing though $\mathbf{0}_{n-1}$. Hence $\rank (S_1 - S_2)=2$ by hypothesis.

\noindent Case 2: If one of $a_{11}$ and $b_{11}$ is 0, and the other one is nonzero, we may assume that $a_{11}=0$ and $b_{11}\neq 0$. Then
  $$
  S_2 - S_1 = \left(\begin{array}{cccc}
b_{11} & b_{12} & \cdots & b_{1n}\\
b_{12} & b_{22}-a_{22} & \cdots & b_{2n}-a_{2n}\\
\vdots & \vdots & \vdots & \vdots \\
b_{1n} & b_{2n}-a_{2n} & \cdots & b_{nn}-a_{nn}
\end{array}\right).
$$
If $\rank (S_2 - S_1) = 1$, then $b_{ij}-a_{ij} = b_{11}^{-1}b_{1i}b_{1j}$ for $2\leq i\leq j\leq n$. Since $\rank (S_{2})=1$ and $b_{11}\neq 0$, we also have $b_{ij}=b_{11}^{-1}b_{1i}b_{1j}$ for $1\leq i\leq j\leq n$. It then follows that $a_{ij}=0$ for all $1\leq i\leq j\leq n$, that is, $S_1 = \mathbf{0}_n $, contradicting to $\rank  (S_{1})=1$. Therefore, in this case $\rank ( S_2 - S_1 )=2$.

\noindent Case 3: If $a_{11} \neq 0$ and $ b_{11}\neq0$, then $a_{ij} = a_{11}^{-1}a_{1i}a_{1j}$ and $b_{ij} = b_{11}^{-1}b_{1i}b_{1j}$ for $1\leq i\leq j\leq n$, where $\rank (S_1)=\rank (S_2)=1$. Suppose that $a_{11}^{-1}a_{1i} = b_{11}^{-1}b_{1i}$ for all $1\leq i\leq n$, we get $S_2  = a_{11}^{-1}b_{11}S_1$. This means that $S_1$ and $S_2$ lie on the same line passing through $\bf0_{n}$, which is a contradiction. Hence there exists $i_0$ such that $a_{11}^{-1}a_{1i_{0}}\neq b_{11}^{-1}b_{1i_{0}}$, where $1 \leq i_0 \leq n$.
Suppose that $\rank (S_2 - S_1) = 1$. If $b_{11}-a_{11}=0$, then we obtain $b_{1i}=a_{1i}$, for all $1\leq i\leq n$. It then follows that $S_2 = S_1$, leading to a contradiction. Thus, we have $b_{11} - a_{11} \neq 0$, and then
\begin{equation*}
b_{ij}-a_{ij}=(b_{11}-a_{11})^{-1}(b_{1i}-a_{1i})(b_{1j}-a_{1j}).
\end{equation*}
On the other hand, recall that
\begin{equation*}
a_{ij}=a_{11}^{-1}a_{1i}a_{1j}\ \text{and}\ b_{ij}=b_{11}^{-1}b_{1i}b_{1j} .
\end{equation*}
With the three equations above, we obtain
$$
(a_{11}^{-1}a_{1i} - b_{11}^{-1}b_{1i})(a_{11}^{-1}a_{1j} - b_{11}^{-1} b_{1j})=0,
$$
for $1 \leq i \leq j \leq n$. When $i=j=i_{0}$, we get $(a_{11}^{-1}a_{1i_0} - b_{11}^{-1} b_{1i_0})^2=0$, which contradicts to $a_{11}^{-1}a_{1i_{0}}\neq b_{11}^{-1}b_{1i_{0}}$. Therefore, the desired conclusion is proved.

\end{proof}

Using Lemma~\ref{lem-rank2} we just proved, we are now ready to determine the girth of the bipartite graph $G(n,q)$.

\begin{theorem}\label{thm-girth}
  The girth of the bipartite graph $G(n,q)$ is 8.
\end{theorem}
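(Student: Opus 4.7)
The plan is to show that the girth is bounded below by $8$ and then exhibit an explicit $8$-cycle. Since $G(n,q)$ is bipartite, every cycle has even length, so it suffices to rule out $4$-cycles and $6$-cycles. A $4$-cycle would correspond to two distinct points lying on two distinct common lines, which is directly forbidden by property (iii) of $H(n,q)$, equivalently by Corollary~\ref{coro-inter}. Hence the girth is at least $6$.

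Next I would rule out $6$-cycles. Suppose to the contrary that $v_1 \sim \ell_1 \sim v_2 \sim \ell_2 \sim v_3 \sim \ell_3 \sim v_1$ is a simple $6$-cycle, with distinct points $v_1,v_2,v_3 \in V(n,q)$ and distinct lines $\ell_1,\ell_2,\ell_3 \in L(n,q)$. The pairs $(v_1,v_2)$, $(v_2,v_3)$, $(v_3,v_1)$ are each contained in some common line, so by Lemma~\ref{lem-s1s2} any two of these three points are adjacent in $\bs_n(\F_q)$. Because $\gs_n(\F_q)$ acts transitively on $\bs_n(\F_q)$ and preserves the arithmetic distance, I may assume $v_1 = \bzero_n$. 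Then $v_2$ lies on $\ell_1$ and $v_3$ lies on $\ell_3$, both of which are distinct lines through $\bzero_n$. By Lemma~\ref{lem-rank2} this forces $\rank(v_2 - v_3) = 2$, contradicting the adjacency of $v_2$ and $v_3$ established above. Therefore no $6$-cycle exists, and the girth is at least $8$.

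Finally I would exhibit an $8$-cycle to show the girth is exactly $8$. Take the four points
\begin{equation*}
v_1 = \bzero_n, \quad v_2 = I_{11}, \quad v_3 = I_{11} + I_{22}, \quad v_4 = I_{22},
\end{equation*}
all of which are distinct and pairwise adjacent when their indices are consecutive in the cycle $v_1 \to v_2 \to v_3 \to v_4 \to v_1$. By Lemma~\ref{lem-s1s2}, each consecutive pair determines a unique maximal set of rank $1$; call these lines $\ell_1,\ell_2,\ell_3,\ell_4$ respectively. A direct check shows $\ell_1 = \{x I_{11}\}$ and $\ell_4 = \{x I_{22}\}$ pass through $\bzero_n$, while $\ell_2 = \{I_{11} + y I_{22} : y\in\F_q\}$ and $\ell_3 = \{I_{22} + y I_{11} : y\in\F_q\}$ do not, and the four lines are mutually distinct. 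This produces a closed walk of length $8$ alternating between points and lines with no repeated vertex, hence a genuine $8$-cycle, completing the argument.

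The main obstacle is the elimination of $6$-cycles: without Lemma~\ref{lem-rank2} one cannot rule out the configuration in which three points are pairwise adjacent yet lie on three different lines. The transitivity of $\gs_n(\F_q)$ is what makes the reduction to the case $v_1 = \bzero_n$ clean, so that Lemma~\ref{lem-rank2} can be applied directly; the construction of an explicit $8$-cycle afterwards is routine.
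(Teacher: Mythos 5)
Your proof is correct and follows essentially the same route as the paper: girth at least $6$ from Corollary~\ref{coro-inter}, elimination of $6$-cycles by normalizing one point to $\bzero_n$ via transitivity of $\gs_n(\F_q)$ and invoking Lemma~\ref{lem-rank2}, and then the same explicit $8$-cycle on the points $\bzero_n$, $I_{11}$, $I_{11}+I_{22}$, $I_{22}$. No gaps; your verification that the four lines are distinct is a nice touch of added care.
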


\begin{proof}
  By the construction of LDPC codes (see Lemma~\ref{lem-s1s2} and Corollary~\ref{coro-inter}), we know that the girth of $G(n,q)$ is at least 6. Now we assume that there is a cycle of length $6$ in $G(n,q)$, i.e.,
  $$
  S_1 \sim \ell_1 \sim S_2 \sim \ell_2 \sim S_3 \sim \ell_3 \sim S_1.
  $$
Since the action of $\gs_n(\F_q)$ on $\bs_n(\F_q)$ is transitive, without loss of generality, we may assume that $S_1 = \mathbf{0}_n$. On one hand, $S_2$ and $S_3$ both lie on $\ell_2$, then $\rank (S_2 - S_3)=1$; On the other hand, $S_2$ and $S_3$ lie on two different lines both passing through $\mathbf{0}_n$ respectively, then $\rank (S_2-S_3) = 2$ by Lemma~\ref{lem-rank2}. This leads to a contradiction. Hence cycles of length $6$ do not exist in $G(n,q)$, which means that the girth of $G(n,q)$ is at least 8. We now give a cycle of length $8$ explicitly. It is not hard to check
 \begin{eqnarray*}
& &  \left(\begin{array}{cc}
  0 & 0 \\
  0 & 0
\end{array}\right)_n
\sim
\left\{ x \left(\begin{array}{cc}
  1 & 0 \\
  0 & 0
\end{array}\right)_n : x\in \F_q \right\}
\sim
\left(\begin{array}{cc}
  1 & 0 \\
  0 & 0
\end{array}\right)_n\\
& \sim &
\left\{\left(\begin{array}{cc}
  1 & 0 \\
  0 & 0
\end{array}\right)_n + x \left(\begin{array}{cc}
  0 & 0 \\
  0 & 1
\end{array}\right)_n : x \in \F_q\right\}
\sim
\left(\begin{array}{cc}
  1 & 0 \\
  0 & 1
\end{array}\right)_n\\
& \sim &
\left\{\left(\begin{array}{cc}
  1 & 0 \\
  0 & 1
\end{array}\right)_n + x \left(\begin{array}{cc}
  1 & 0 \\
  0 & 0
\end{array}\right)_n: x\in \F_q \right\}
\sim
\left(\begin{array}{cc}
  0 & 0 \\
  0 & 1
\end{array}\right)_n \\
& \sim &
\left\{x\left(\begin{array}{cc}
  0 & 0 \\
  0 & 1
\end{array}\right)_n: x\in \F_q\right\}
\sim
\left(\begin{array}{cc}
  0 & 0 \\
  0 & 0
\end{array}\right)_n
\end{eqnarray*}
is an 8-cycle in $G(n,q)$. This completes the proof.

\end{proof}

The LDPC codes $\C(n,q)$ and $\C^T(n,q)$ both have good girth, since belief propagation algorithms work well in general if the girth is greater than $4$. In addition, when $n = 2$, the diameter of the bipartite graph $G(2,q)$ is proved to be $6$. A bipartite graph with small diameter also allows belief propagation to work better.
\newpage
\begin{theorem}\label{thm-dia}
The diameter of the bipartite graph $G(2,q)$ is $6$.
\end{theorem}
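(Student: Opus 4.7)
The plan is to prove the diameter equals $6$ by establishing matching upper and lower bounds. For the upper bound, I will show that every pair of vertices in $G(2,q)$ is joined by a path of length at most $6$, splitting the analysis into three cases according to the types (point or line) of the endpoints. For the lower bound, I will exhibit two explicit lines whose bipartite distance equals $6$.

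The starting observation for the upper bound is that in $\bs_2(\F_q)$ every two points have arithmetic distance at most $2$ (since the rank of a $2 \times 2$ matrix is bounded by $2$), so Lemma~\ref{lem-add} yields $d(S, S') \leq 3$, with equality possible only when $\mathrm{char}(\F_q) = 2$ and $S - S'$ is alternate of rank $2$. Each step of distance in $\Gamma(2, q)$ traverses two edges (point, line, point) in $G(2, q)$, so the point-to-point distance in $G(2,q)$ is at most $2 \cdot 3 = 6$. For the point-to-line case, I would find a point $P' \in \ell$ with $d(P, P') \leq 2$, giving a path $P \to \cdots \to P' \sim \ell$ of length at most $5$; in odd characteristic any $P' \in \ell$ works, while in characteristic $2$ one must show that not every $P - P'$ can be alternate of rank $2$. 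This is where the key observation enters: if $P - P'$ were alternate for every $P' \in \ell$, then the difference of two such matrices would be a nonzero alternate rank-$1$ element of $\bs_2(\F_q)$, but a nonzero alternate $2 \times 2$ symmetric matrix has determinant equal to the square of its off-diagonal entry, hence rank $2$, not $1$. The line-to-line case is handled by the same non-alternate selection, yielding distance at most $2 + 4 = 6$.

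For the lower bound, I would consider the two lines $\ell_1 = \{ xI_{11} : x \in \F_q \}$ (the standard maximal set $\mathcal{M}_1$ of Proposition~\ref{prop-m1}) and $\ell_2 = \left\{ \begin{pmatrix} x & 1 \\ 1 & 0 \end{pmatrix} : x \in \F_q \right\}$. Using Lemma~\ref{lem-s1s2}, it is straightforward to verify that $\ell_2$ is indeed a maximal set of rank $1$ and that $\ell_1 \cap \ell_2 = \emptyset$. For every $P_1 \in \ell_1$ and $P_2 \in \ell_2$, the matrix $P_1 - P_2$ has determinant $-1 \neq 0$, so $\ad(P_1, P_2) = 2$. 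Hence no length-$4$ path between $\ell_1$ and $\ell_2$ exists (such a path would require an adjacent pair of points, one from each line), forcing $d_G(\ell_1, \ell_2) \geq 6$; combined with the upper bound, equality holds and the diameter is exactly $6$.

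The main obstacle is the characteristic-$2$ case, where $d(S, S')$ can strictly exceed $\ad(S, S')$ and the naive arithmetic-distance bound falls short of what is needed for the point-to-line and line-to-line cases. Carefully ruling out the ``alternate-for-every-$P'$'' scenario via the non-existence of nonzero rank-$1$ alternate $2 \times 2$ symmetric matrices is the step that I expect to require the most care.
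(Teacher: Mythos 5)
Your proposal is correct and takes essentially the same approach as the paper: the same point--point/point--line/line--line case split for the upper bound, the same use of Lemma~\ref{lem-add}, the same key fact that a line contains at most one point differing from a given point by an alternate matrix (since a nonzero alternate $2\times 2$ matrix has rank $2$), and the same extremal pair of lines $\{xI_{11} : x \in \F_q\}$ and $\{I_{12}+I_{21}+xI_{11} : x \in \F_q\}$ for the lower bound. The only differences are in detail rather than in route, and both are improvements: you get the line--line upper bound by prefixing one edge to the point--line bound, where the paper instead uses a pigeonhole count of the $q^3-q^2+q$ points on lines meeting each of the two given lines, and you actually verify that the extremal pair is at distance $6$ (the constant determinant $-1$ rules out paths of length $2$ and $4$, and bipartite parity does the rest), a step the paper leaves as ``easy to check.''
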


\begin{proof}
By the definition of the diameter, we need only calculate the length $\ell(u,v)$ of the shortest path between any two distinct vertices $u$ and $v$ of $G(2,q)$. If $u$ is adjacent to $v$, then $\ell(u,v)=1$. If $u$ is not adjacent to $v$, there are three cases:

  \noindent Case 1: Both $u$ and $v$ are points. Let $r$ be the distance between $u$ and $v$ in $\mathbb{S}_{2}(\mathbb{F}_{q})$. This means that $r$ is the least positive integer for which there is a sequence of $r+1$ points $S_{0}(=u),S_{1}, S_{2},\ldots, S_{r}(=v)$ such that $S_{i}$ and $S_{i+1}$ are adjacent for $i=0,1,2,\ldots, r-1$. Then
\begin{align*}
  u = &S_{0}\sim \{S_{0}+x(S_{1}-S_{0}):x \in \mathbb{F}_{q}\}\sim \\
  &S_{1}\sim \{S_{1}+x(S_{2}-S_{1}):x \in \mathbb{F}_{q}\}\sim \\
  &\ldots \ldots \\
  &S_{r-1}\sim \{S_{r-1}+x(S_{r}-S_{r-1}):x \in \mathbb{F}_{q}\}\sim S_{r}=v
\end{align*}
is a shortest path from $u$ to $v$ in $G(2,q)$, which implies that $\ell(u,v) = 2r$. The maximal arithmetic distance between $u$ and $v$ is $\max ({\rm Rank}(u - v)) = 2$. From Lemma~\ref{lem-add} we know that when the characteristic of $\F_q$ is 2, the maximal distance between $u$ and $v$ in $\bs_2(\F_q)$ is 3, which means that the maximal $\ell(u,v)$ is 6; when the characteristic of $\F_q$ is an odd prime, the maximal distance between $u$ and $v$ in $\bs_2(\F_q)$ is 2, which means that the maximal $\ell(u,v)$ is 4.

\noindent Case 2: One of $u$ and $v$ is a point and the other is a line.
Without loss of generality, assume that $u$ is a point and $v$ is a line. Since $u$ is not adjacent to $v$, we have $\ell(u,v)\geq 3$. As $\gs_2(\F_q)$ acts transitively on $\bs_2(\F_q)$, we can assume that $u={\bf 0}_{2}$.
\begin{itemize}
  \item[(a)] When the characteristic of $\mathbb{F}_{q}$ is odd, we have $d({\bf 0}_2, S)\leq 2$ for any point $S\in \mathbb{S}_{2}(\mathbb{F}_{q})$. Let $S$ be any point on the line $v$. If $d({\bf 0}_2,S)=1$, then
$$u\sim \{u+x(S-u): x\in \mathbb{F}_{q}\}\sim S \sim v$$
is a shortest path from $u$ to $v$ in $G(2,q)$, which implies that  $\ell(u,v)=3$. Otherwise, $d({\bf0}_{2},S)=2$, which means that there exists $S'\in \mathbb{S}_{2}(\mathbb{F}_{q})$ such that $u$ is adjacent to $S'$ and $S'$ is adjacent to $S$. One can see that
$$u \sim \{u+x(S'-u): x \in \mathbb{F}_{q}\} \sim S' \sim \{S' + x(S-S'): x \in \mathbb{F}_{q}\} \sim S \sim v$$
is a path from $u$ to $v$ in $G(2,q)$, which implies that $\ell(u,v) \leq 5$.

\item[(b)] When the characteristic of $\F_q$ is $2$, let $S$ be any point of $\mathbb{S}_{2}(\mathbb{F}_{q})$ such that $d(S,{\bf 0}_{2})=3$, then $S$ is alternate, i.e.,
$$
S=\left(\begin{array}{cccc} 0&x\\x&0\end{array}\right), \textrm{ where}~ x \in \F_q^*.
$$
It is easy to see that any maximal set of rank 1 contains at most one point of the form
$$
\left(\begin{array}{cccc} 0&x\\x&0\end{array}\right), \textrm{ where}~ x \in \F_q^*.
$$
It then follows that there exists at least one point $S' \in v$ such that $d(S',{\bf0}_{2})\leq2$. Then with similar argument, we conclude that $\ell (u,v) \leq 5$.
\end{itemize}

\noindent Case 3: Both $u$ and $v$ are lines.
If $u\cap v\neq \emptyset$, then $\ell(u,v)=2$.
If $u\cap v=\emptyset$, denote by $\mathbb{A}$ the set of points on the lines that intersect with $u$ and by $\mathbb{B}$ the set of points on the lines that intersect with $v$. Note that a line of $\bs_2(\F_q)$ contains $q$ points and there are $q+1$ lines passing through a given point. Then it follows from Theorem~\ref{thm-girth} that $|\mathbb{A}|=|\mathbb{B}|=q(q(q-1))+q=q^3-q^2+q$. Thus, $\mathbb{A}\cap \mathbb{B}\neq \emptyset$ for
$|\mathbb{S}_2(\mathbb{F}_{q})|=q^3$, which implies that $\ell(u,v)\leq 6$.
Let
$$
u = \left(\begin{array}{cc}
  0 & 0 \\
  0 & 0
\end{array}\right) + x \left(\begin{array}{cc}
    1 & 0 \\
    0 & 0 \end{array}\right) \quad \textrm{for $x \in \F_q$,}
$$

$$
v = \left(\begin{array}{cc}
  0 & 1 \\
  1 & 0
\end{array}\right) + x \left(\begin{array}{cc}
  1 & 0 \\
  0 & 0
\end{array}\right) \quad \textrm{for $x \in \F_q$.}
$$

It is easy to check that $\ell(u,v)=6$.


Hence, we conclude that the diameter of $G(2,q)$ is 6.

\end{proof}

\section{The Minimum Distance and the Stopping Distance}\label{sec-main}

In this section, we investigate both the minimum distance and the stopping distance of the two classes of LDPC codes: for $\C^T(n,q)$, these two properties could be determined explicitly; for $\C (n,q)$, they could be determined for two special cases. In addition, we give lower bounds on the dimensions of $\C(n,q)$ and $\C^{T}(n,q)$ for general $n$ and $q$.

To determine the minimum distance of LDPC codes, we try to find the smallest integer $d$, for which there are $d$ linearly dependent columns in $H$. Then, any $r$ columns with $1 \leq r \leq d-1$ of $H$ are linearly independent~\cite{Lint99}. This argument will be repeatedly used when we determine the minimum distance of $\C(n,q)$ and $\C^T(n,q)$ in the section.

\subsection{The Minimum Distance and the Stopping Distance of $\C^{T}(n,q)$}\label{sec-sub1}

Both of the minimum distance and the stopping distance of $\C^T (n,q)$ can be explicitly determined for all $n, q$, as shown in the following theorem.

\begin{theorem}\label{thm-distct}
  The minimum distance and the stopping distance of $\mathcal{C}^T(n,q)$ are both $2q$.
\end{theorem}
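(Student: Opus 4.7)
My plan is two-fold: first exhibit an explicit codeword of weight $2q$, yielding $d(\C^T(n,q))\leq 2q$ and hence $s(\C^T(n,q))\leq 2q$ (since the support of every codeword is a stopping set); then show every nonempty stopping set has at least $2q$ lines, which combined with $s\leq d$ forces both quantities to equal $2q$. For the upper bound I use the ``affine plane'' $\Pi=\{x I_{11}+y I_{22} : x,y\in \F_q\}\subset\bs_n(\F_q)$. Inside $\Pi$ sit $q$ ``row lines'' $R_y=\{xI_{11}+yI_{22}:x\in\F_q\}$ and $q$ ``column lines'' $C_x=\{xI_{11}+yI_{22}:y\in\F_q\}$, each a maximal set of rank $1$ by Lemma~\ref{lem-s1s2} since any two of its points differ by a rank-$1$ multiple of $I_{11}$ or $I_{22}$. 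Every point of $\Pi$ lies on exactly one row and one column, while no point outside $\Pi$ lies on any of the $2q$ lines, so the indicator of $\{R_y\}_y\cup\{C_x\}_x$ is a codeword of weight $2q$.

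For the lower bound let $L$ be a nonempty stopping set regarded as a set of lines. Fix any $\ell\in L$ with points $P_1,\ldots,P_q$ and, for each $i$, choose a line $\ell_i\in L\setminus\{\ell\}$ through $P_i$; by Corollary~\ref{coro-inter} the $\ell_i$ are pairwise distinct. The decisive step uses that $G(n,q)$ has girth $8$ (Theorem~\ref{thm-girth}) to force the $\ell_i$'s to be \emph{pairwise disjoint}: any intersection $Q\in\ell_i\cap\ell_j$ with $i\neq j$ would lie off $\ell$ and close the $6$-cycle $\ell\sim P_i\sim\ell_i\sim Q\sim\ell_j\sim P_j\sim\ell$. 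Consequently $\bigcup_i(\ell_i\setminus\{P_i\})$ consists of $q(q-1)$ distinct off-$\ell$ points $Q_{i,j}$, each of which requires another line of $L$ through it that is not $\ell_i$, not $\ell$ (since $Q_{i,j}\notin\ell$), and not any $\ell_k$ with $k\neq i$ (by disjointness); hence it is a ``new'' line $\ell''\in L$ outside $\{\ell,\ell_1,\ldots,\ell_q\}$.

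The final step is to bound how many $Q_{i,j}$'s a single new line can cover. A second girth-$8$ argument shows any such $\ell''$ must be disjoint from $\ell$: if $\ell''$ met $\ell$ at some $P_{i'}$ and contained $Q_{i,j}$, then either $\ell''$ and $\ell_i$ would share both $P_{i'}=P_i$ and $Q_{i,j}$ (impossible by Corollary~\ref{coro-inter}) when $i'=i$, or the $6$-cycle $\ell\sim P_{i'}\sim\ell''\sim Q_{i,j}\sim\ell_i\sim P_i\sim\ell$ would appear when $i'\neq i$. Being disjoint from $\ell$, $\ell''$ meets each (disjoint) $\ell_i$ in at most one point by Corollary~\ref{coro-inter}, so covers at most $q$ of the $Q_{i,j}$'s. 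Hence at least $q(q-1)/q=q-1$ new lines lie in $L$, giving $|L|\geq 1+q+(q-1)=2q$.

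The principal obstacle I anticipate is the careful case analysis in the second girth-$8$ argument---ensuring every way a ``new line'' might shortcut through a point of $\ell$ genuinely closes a $4$- or $6$-cycle in $G(n,q)$. Once that is clean, the remainder is routine double counting, and the equality $|L|=2q$ is realized exactly by the weight-$2q$ codeword above: take $\ell$ to be one row, the $\ell_i$'s the $q$ columns, and the $q-1$ ``new'' lines the remaining $q-1$ rows.
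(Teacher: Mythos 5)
Your proposal is correct, and it splits exactly where the paper's proof does, but the two halves are established differently. The upper bound is the paper's own construction in disguise: your row lines $R_y$ and column lines $C_x$ in the diagonal plane $\Pi=\{xI_{11}+yI_{22}: x,y\in\F_q\}$ are precisely the line sets $\mathcal{A}_1$ and $\mathcal{A}_2$ of the paper, and both arguments rest on the observation that each point of $\Pi$ lies on exactly two of the chosen $2q$ lines while no point outside $\Pi$ lies on any of them, so the corresponding $2q$ columns of $H^T(n,q)$ sum to zero over $\F_2$. The genuine difference is the lower bound. The paper obtains $d,s\geq 2q$ by citing Tanner's girth bound~\cite{Tan81} (girth $8$, column weight $q$), adding only the remark that Tanner's argument carries over to stopping sets. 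You instead prove the bound from first principles: rooting at a line $\ell$ of a nonempty stopping set $L$, the stopping property and Corollary~\ref{coro-inter} produce $q$ distinct lines $\ell_i\in L$ through the points of $\ell$; girth $8$ (Theorem~\ref{thm-girth}) forces these $\ell_i$ to be pairwise disjoint; and the $q(q-1)$ outer points on the $\ell_i$ then require at least $q-1$ further lines of $L$. This is in effect a self-contained proof of the girth-$8$ case of Tanner's bound, stated directly for stopping sets, and it is what makes the paper's one-sentence claim about the stopping distance fully rigorous without any external reference; the cost is the extra counting. One simplification: your second girth-$8$ argument (showing each ``new'' line $\ell''$ is disjoint from $\ell$) can be deleted, because the final double count only needs that $\ell''$ covers at most $q$ of the points $Q_{i,j}$, which is automatic from the fact that any line contains exactly $q$ points.
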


\begin{proof}
Since the girth of $G(n,q)$ is $8$, the minimum distance of $\mathcal{C}^T (n,q)$ is at least $2q$ by~\cite[Theorem 2]{Tan81}. We now find $2q$ linearly dependent columns in $H^T (n,q)$. Consider the columns corresponding to the following two line sets:
 \begin{eqnarray*}
    \mathcal{A}_1 & = & \left\{
    \left\{\left(\begin{array}{cc}
      0 & 0 \\
      0 & x
    \end{array}\right)_n + y \left(\begin{array}{cc}
      1 & 0 \\
      0 & 0
    \end{array}\right)_n : y \in \F_q \right\}: \   x \in \F_q \right\}, \\
    \mathcal{A}_2 & = & \left\{
    \left\{ \left(\begin{array}{cc}
      x & 0 \\
      0 & 0
    \end{array}\right)_n + y \left(\begin{array}{cc}
      0 & 0 \\
      0 & 1
    \end{array}\right)_n : y \in \F_q \right\}: \  x \in \F_q\right\} .
  \end{eqnarray*}

Clearly, the $2q$ lines in the two line sets above are distinct. Therefore, the number of the columns corresponding to the chosen lines is $2q$. We now further show that these $2q$ columns are linearly dependent over $\F_2$. It is obvious that each point of the form
$$
\left(\begin{array}{cc}
  x & 0 \\
  0 & y
\end{array}\right)_n,
$$
with $x, y\in \F_q$, of $\bs_n(\F_q)$ appears exactly twice in $\mathcal{A}_1\cup \mathcal{A}_2$. Moreover, the other points do not appear in $\mathcal{A}_1 \cup \mathcal{A}_2$. The $2q$ columns corresponding to lines in $\mathcal{A}_1\cup \mathcal{A}_2$ are linearly dependent over $\F_2$. Thus, the minimum distance of $\C^T (n,q)$ is $2q$ .

Note that Tanner's proof in~\cite{Tan81} also holds for the stopping distance. Thus, $2q$ is also a lower bound on the stopping distance of $\C^T(n,q)$. From the definition of the stopping distance, it follows that the minimum distance of $\C^T (n,q)$ is an upper bound of the stopping distance. This completes the proof.

\end{proof}

\subsection{The Minimum Distance and The Stopping Distance of $\C(n,q)$}\label{sec-sub2}
For the LDPC code $\C(n,q)$, we first present the lower bounds for both the minimum distance and the stopping distance in general, and then we determine both the two properties explicitly in some special cases.

For general $n$ and $q$, we have the following lower bounds for both the minimum distance and the stopping distance.

\begin{theorem}\label{thm-gen}
 Both the minimum distance and the stopping distance of $\C(n,q)$ are at least $\frac{2(q^n-1)}{q-1}$.
\end{theorem}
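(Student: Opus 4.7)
The plan is to apply Tanner's girth bound (\cite[Theorem 2]{Tan81}) to $H(n,q)$ in exactly the same way that it was used in the proof of Theorem~\ref{thm-distct}, but now exploiting the column weight of $H(n,q)$ instead of the column weight of $H^T(n,q)$. By Theorem~\ref{thm-girth}, the bipartite graph $G(n,q)$ has girth $8$, and by Lemma~\ref{lem-maxset} each column of $H(n,q)$ has weight $\gamma=\frac{q^n-1}{q-1}$. Tanner's bound then yields minimum distance at least $2\gamma=\frac{2(q^n-1)}{q-1}$, which is exactly what we want.

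For the proof to be self-contained, I would unfold Tanner's argument in combinatorial terms directly on $G(n,q)$. Let $I$ be the support of a non-zero codeword of $\C(n,q)$ of size $d$; the $\gs_n(\F_q)$-transitivity lets us assume $\mathbf{0}_n\in I$. Since every parity check is satisfied, each line $\ell$ containing a point of $I$ must contain at least two points of $I$. Take the $\gamma$ lines $\ell_1,\ldots,\ell_\gamma$ through $\mathbf{0}_n$; each $\ell_i$ contributes at least one further point $P_i\in I\setminus\{\mathbf{0}_n\}$. Property (iii) of $H(n,q)$ (equivalently, Corollary~\ref{coro-inter}) forces the $P_i$ to be pairwise distinct, giving already $|I|\ge\gamma+1$. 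Now fix $P_1$ and consider the $\gamma-1$ lines through $P_1$ other than $\ell_1$; each must contain a further point of $I$. These new points cannot coincide with $\mathbf{0}_n$ or any $P_j$ with $j\ge 2$: coincidence with $\mathbf{0}_n$ would make two lines share two points (violating (iii)), while coincidence with some $P_j$ would produce a $6$-cycle $\mathbf{0}_n\sim\ell_1\sim P_1\sim\ell'\sim P_j\sim\ell_j\sim\mathbf{0}_n$ in $G(n,q)$, contradicting girth~$8$. Similarly, two distinct of these new points cannot coincide (else $P_1$ and some other point would share two lines). Thus we obtain $\gamma-1$ additional distinct points in $I$, for a total of $|I|\ge(\gamma+1)+(\gamma-1)=2\gamma$.

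The stopping-distance bound follows from the \emph{same} argument word for word, since the only feature of $I$ actually used was the defining property of a stopping set: every check node adjacent to $I$ meets $I$ in at least two variable nodes. Thus any non-empty stopping set also has size at least $2\gamma=\frac{2(q^n-1)}{q-1}$. There is no real obstacle here: this bound is a textbook consequence of the girth-$8$ and column-weight data already established, and the work in the remainder of Section~\ref{sec-sub2} will be to sharpen it (or match it exactly) in special cases, rather than to improve on this general statement.
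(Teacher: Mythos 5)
Your proposal is correct and takes essentially the same route as the paper: the paper's proof of Theorem~\ref{thm-gen} is exactly the one-line application of Tanner's bound (\cite[Theorem 2]{Tan81}) to the girth-$8$ graph $G(n,q)$ with column weight $\gamma=\frac{q^n-1}{q-1}$, noting that the argument applies verbatim to stopping sets. Your explicit combinatorial unfolding of Tanner's argument (the $\gamma$ lines through $\mathbf{0}_n$, then the $\gamma-1$ further lines through $P_1$, with distinctness enforced by Corollary~\ref{coro-inter} and the absence of $6$-cycles) is a correct and self-contained elaboration of that same citation, not a different method.
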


\begin{proof}
  Since the girth of $G(n,q)$ is $8$, we obtain that the minimum distance of $\C(n,q)$ is at least $\frac{2(q^n-1)}{q-1}$ by~\cite[Theorem 2]{Tan81}. The proof also holds for the stopping distance.
\end{proof}

It is clear that the lower bounds are not tight for many cases. We now consider some special cases, for which we are able to determine both the minimum distance and the stopping distance explicitly. Hereafter, assume that the characteristic of $\F_q$ is $2$. When $n = 2$, both the minimum distance and the stopping distance of $\C (2,q)$ are explicitly given in the following theorem.

\begin{theorem}\label{thm-distn2q}
  Both the minimum distance and the stopping distance of $\C(2,q)$ are $4q$.
\end{theorem}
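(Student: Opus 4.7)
Since the support of every codeword is a stopping set, one always has $s(\C) \leq d_{\min}(\C)$. It thus suffices to exhibit a codeword of weight exactly $4q$ (giving $d_{\min}\leq 4q$ and hence $s\leq 4q$), and separately to show that every nonempty stopping set has size at least $4q$.

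For the upper bound, the plan is to construct an explicit $A \subset \bs_2(\F_q)$ of cardinality $4q$ such that $|A \cap \ell|$ is even for every line $\ell$; the indicator vector $\chi_A$ then lies in the kernel of $H(2,q)$, so $A$ is simultaneously a codeword support and a stopping set. A natural candidate is an $\F_2$-affine subspace of $\bs_2(\F_q)$ of dimension $k+2$ (writing $q = 2^k$), since such a set meets every line in an $\F_2$-affine subset whose cardinality is either $0$ or a power of $2$; the only failure mode is an intersection of size exactly $1$. Mirroring the ``rectangular'' construction used in the proof of Theorem~\ref{thm-distct}, I would take $A$ to be a union of two overlapping patterns, carefully shifted so that the single-point contributions on every rank-$1$ direction $D$ either coincide or both vanish---the critical case being $D = (1,\beta,\beta^2)$ with $\beta \notin \F_2$.

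For the lower bound, suppose $A$ is a nonempty stopping set with $|A| < 4q$; I would derive a contradiction. By transitivity of $\gs_2(\F_q)$ on $\bs_2(\F_q)$, place $\bzero_2 \in A$. The stopping-set condition forces each of the $q+1$ lines through $\bzero_2$ to contain at least one further point of $A$; call these rank-$1$ companions $r_1,\ldots,r_{q+1} \in A$. By Lemma~\ref{lem-rank2}, $\rank(r_i - r_j) = 2$ whenever $i \neq j$, so no two $r_i$'s share a line. Consequently, each of the $q$ lines through $r_i$ other than the line determined by $\bzero_2$ and $r_i$ must contain yet another point of $A$, and such a new point is necessarily rank $2$ (being non-adjacent to both $\bzero_2$ and to every $r_j$, $j\neq i$). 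Iterating this reasoning at each $r_i$ (after translating $r_i$ to $\bzero_2$ by an element of $\gs_2(\F_q)$ and re-invoking Lemma~\ref{lem-rank2}) bounds the amount of sharing between the $q(q+1)$ second-layer lines and pushes $|A|$ up to $4q$.

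The hard part is the combinatorial bookkeeping in the lower bound. A direct count gives only $|A| \geq 1 + (q+1) = q+2$, and boosting this to $4q$ requires iterating the stopping-set condition at each second-layer rank-$2$ point and repeatedly invoking Lemma~\ref{lem-rank2} to limit how many $r_i$-lines a single forced point can simultaneously cover. Once $s(\C(2,q)) \geq 4q$ is established, the inequalities $4q \leq s(\C(2,q)) \leq d_{\min}(\C(2,q)) \leq 4q$ close the proof.
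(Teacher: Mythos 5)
Your overall architecture coincides with the paper's: exhibit a $4q$-point set meeting every line an even number of times (upper bound), show every nonempty stopping set has size at least $4q$ (lower bound), and close via $4q \le s \le d_{\min} \le 4q$. For the upper bound your candidate is in fact the right one: the paper's set in Lemma~\ref{lem-upper4q} is precisely the $\F_2$-subspace $A=\left\{\left(\begin{smallmatrix} x & y\\ y & y^2+z \end{smallmatrix}\right): x,z\in\F_2,\ y\in\F_q\right\}$ of $\F_2$-dimension $k+2$, and since a line $\{S_0+tD: t\in\F_q\}$ meets an $\F_2$-subspace in either $\emptyset$ or a coset of $A\cap \F_q D$, a short computation showing $|A\cap \F_q D|=2$ for every rank-one direction $D$ (both for $D=(0,0,1)$ and $D=(1,\beta,\beta^2)$) gives that every intersection is $0$ or $2$. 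But you only describe a strategy ("carefully shifted so that the single-point contributions \ldots coincide or both vanish") without producing the set or verifying the even-intersection property, so this half is incomplete as written.

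The genuine gap is the lower bound. First, your parenthetical claim that a forced second-layer point is non-adjacent "to every $r_j$, $j\neq i$" is false: in the minimum-weight codeword itself, a single second-layer point is collinear with $q$ of the $q+1$ companions of $\bzero_2$ (this is exactly what the $J_q$ blocks of the matrix $M$ in the paper's Lemma~\ref{lem-upper4q} encode). Indeed, if your claim were true there could be no sharing at all between second-layer lines of different $r_i$'s, and your count would give $|\mathbb{A}|\ge 1+(q+1)+q(q+1)=q^2+2q+2$, which exceeds $4q$ for every $q$ and thus contradicts the upper bound; so the plan as stated is internally inconsistent. Second, the tool you propose for limiting sharing, Lemma~\ref{lem-rank2}, cannot do that job: it only concerns ranks of differences of rank-one points, whereas the actual constraint on how many forced lines one point can cover comes from the structure of common deleted neighbourhoods, which your sketch never touches. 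The paper must first prove that $|U(S_1)\cap U(S_2)|\in\{0,q\}$ (Lemma~\ref{lem-twocommon}), that $|U(\{S_1,S_2,S_3\})|\in\{0,1,q\}$ (Lemma~\ref{lem-threecommon}), and the "collinear with two implies collinear with all" property (Remark~\ref{rmk-common}); it then runs an extremal argument, choosing $\mathbb{A}_{\max}$ maximal among mutually non-collinear subsets of $\mathbb{A}$ with $|U(\cdot)|=q$, setting $\mathbb{B}_{\max}=U(\mathbb{A}_{\max})\cap\mathbb{A}$, building two further sets $\mathbb{C}_{\max},\mathbb{D}_{\max}$, proving the four sets pairwise disjoint to get $|\mathbb{A}|\ge (q+2)s+(q+2)t-2st$, and finishing with a case analysis on $t\ge 2$, $t=1$, $t=0$ (Lemma~\ref{lem-lower4q} and the appendix). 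The "combinatorial bookkeeping" you defer is exactly this argument and is the entire content of the proof; without it, and with the false non-adjacency claim, your approach does not get past $|\mathbb{A}|\ge q+2$.
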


We postpone the proof of this theorem to state the following results which will be used in the proof.

\begin{lemma}\label{lem-upper4q}
Let $d$ be the minimum distance of $\C(2,q)$. Then $d\leq 4q$.
\end{lemma}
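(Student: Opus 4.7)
The plan is to exhibit a codeword of weight $4q$ in $\C(2,q)$, from which $d \leq 4q$ is immediate. Since the columns of $H(2,q)$ are indexed by points of $\bs_2(\F_q)$ and its rows by the maximal sets of rank $1$, a codeword of weight $w$ is the same as a subset $P \subset \bs_2(\F_q)$ of size $w$ such that $|\ell \cap P|$ is even for every maximal set of rank $1$ (``line'') $\ell$.

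First, I would use the characteristic-$2$ hypothesis to produce a candidate $P$. Since $q$ is a power of $2$, $q$ itself is even, and a line has $q$ points, so the full point set is trivially a codeword; the task is to cut this down to weight $4q$. A natural attempt is to take $P$ as a union of four pairwise disjoint lines (so $|P| = 4q$ automatically), for example four lines in a fixed direction $M_0$ with base points carefully chosen in the quotient $\bs_2(\F_q)/\langle M_0 \rangle$, or two pairs of parallel lines in two distinct directions arranged in a ``rectangle''. Alternatively, $P$ can be defined by algebraic constraints, e.g.\ a level set involving the quadratic form $Q(S) = s_{11}s_{22}+s_{12}^2$, which vanishes on rank-$1$ symmetric matrices in characteristic $2$.

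Next, one verifies that $|\ell \cap P|$ is even for every line $\ell$, by case analysis on the rank-$1$ direction $M = uu^T$ of $\ell$. For lines in the chosen direction(s), the intersection is either $0$ or $q$, automatically even. For the remaining directions one parametrizes $\ell = \{S_0 + tM : t \in \F_q\}$ and expresses the membership condition $S_0 + tM \in P$ as an affine condition in $t$; the number of valid $t$ is then counted directly and shown to be even, using characteristic-$2$ identities such as $(a+b)^2 = a^2 + b^2$.

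The main obstacle is picking the specific $P$ so that the evenness condition in the second step works uniformly across all $q+1$ line directions. For a union of four parallel lines this amounts to placing four base points in the quotient plane so that every achievable affine line there meets them an even number of times; for $q \geq 4$ the naive ``unit square'' configuration fails, so the construction must exploit the characteristic-$2$ structure more subtly (for instance via the alternate-matrix involution $S \mapsto S + \begin{pmatrix}0&1\\1&0\end{pmatrix}$, which has no fixed points and pairs up lines in a controlled way). Once a valid $P$ of size $4q$ is identified and the case analysis is completed, the inequality $d \leq 4q$ follows.
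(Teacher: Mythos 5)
Your proposal is not yet a proof: the one step that carries all the content --- actually exhibiting a weight-$4q$ codeword --- is deferred (``once a valid $P$ of size $4q$ is identified \dots the inequality follows''), and this is precisely what the lemma asserts. Worse, your primary candidate cannot be made to work. A union of four pairwise disjoint lines is never a codeword support of $\C(2,q)$ when $q\geq 4$. For four lines parallel to a direction $M_0$, pass to the quotient plane $\bs_2(\F_q)/\langle M_0\rangle$: by Lemma~\ref{lem-rank2} each pencil $\langle M_0,M\rangle$ contains only the two rank-one directions $M_0$ and $M$, so the remaining $q$ rank-one directions project to $q$ \emph{distinct} directions of the quotient plane, and the evenness condition says that every affine line in each of these $q$ directions must meet the four image points in an even number of points. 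Four points cannot all be collinear in two different directions, so each such direction must split them into two pairs with parallel difference vectors; but four points admit only three pairings, and each pairing is compatible with at most one direction, so at most $3$ directions can be even --- forcing $q\leq 3$, i.e.\ $q=2$ in characteristic $2$. Analogous quotient arguments eliminate the configurations with two, three, or four distinct line directions. Your other concrete suggestion also fails: for a rank-one direction $M$ one has $\det M=0$, so $\det(S_0+tM)$ is an \emph{affine} function of $t$; whenever it is nonconstant the line meets every level set of $Q(S)=s_{11}s_{22}+s_{12}^2$ in exactly one point, which is odd. So neither proposed construction can be completed, and the involution remark is too vague to fill the gap.

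The paper's proof exhibits a support of the opposite kind: a set of $4q$ points of which \emph{no three are collinear}, so that every line meets it in $0$ or $2$ points. Writing $N_y=\left(\begin{smallmatrix}1&y\\ y&y^2\end{smallmatrix}\right)$, it takes $\mathbb{N}_1=\{N_y: y\in\F_q\}\cup\{N_y+I_2: y\in\F_q\}$ and $\mathbb{N}_2=\mathbb{N}_1+\mathrm{diag}(1,0)$ (presented there via a primitive element $\alpha$), checks that any two points inside the same $\mathbb{N}_i$ are non-collinear, and computes the collinearity incidence between $\mathbb{N}_1$ and $\mathbb{N}_2$ to be $\left(\begin{smallmatrix}J_q&I_q\\ I_q&J_q\end{smallmatrix}\right)$. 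Thus each point of the set has exactly $q+1$ collinear partners in the set, and since only $q+1$ lines pass through a point (Lemma~\ref{lem-maxset}), every line meeting $\mathbb{N}_1\cup\mathbb{N}_2$ contains exactly two of its points; hence the corresponding $4q$ columns of $H(2,q)$ sum to zero over $\F_2$. If you want to repair your argument, this ``cap-like'' structure --- not a union of lines --- is what you should construct.
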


\begin{proof}
  We prove this result by choosing $4q$ columns of $H(2,q)$, which are linearly dependent over $\F_2$. Let $\alpha$ be a primitive element of $\F_q$, and consider the columns corresponding to the following points:
\begin{align*}
  P_{1} & = \left(\begin{array}{cccc} 1 & 0 \\
    0 & 0
  \end{array}\right),
  & P_{i} & =\left(\begin{array}{cccc} 1 & \alpha^{i-2} \\
    \alpha^{i-2} & \alpha^{2i-4}
  \end{array}\right), \\
  P_{q+1} & = \left(\begin{array}{cccc} 0 & 0 \\
    0 & 1
  \end{array}\right),
  & P_{q+i} & =\left(\begin{array}{cccc} 0 & \alpha^{i-2} \\
    \alpha^{i-2} & \alpha^{2i-4}+1
  \end{array}\right), \\
  Q_{1} & = \left(\begin{array}{cccc}  0 & 0 \\
    0 & 0
  \end{array}\right),
  & Q_{i} & =\left(\begin{array}{cccc}  0 & \alpha^{i-2} \\
    \alpha^{i-2} & \alpha^{2i-4}
  \end{array}\right),  \\
  Q_{q+1} & =\left(\begin{array}{cccc}  1 & 0 \\
    0 & 1
  \end{array}\right),
  & Q_{q+i} & =\left(\begin{array}{cccc}  1 & \alpha^{i-2} \\
    \alpha^{i-2} & \alpha^{2i-4}+1
  \end{array}\right),
\end{align*}
where $2\leq i\leq q$. Clearly the $4q$ points above are distinct. Now we further prove that the $4q$ columns are linearly dependent. Let $\mathbb{N}_1 = \{P_1, P_2, \ldots, P_{2q} \}$ and $\mathbb{N}_2 = \{Q_1, Q_2, \ldots, Q_{2q} \}$. Note that any two distinct points of $\mathbb{N}_i$ with $i=1, 2$ are non-collinear. Hence, it follows that no three points are collinear in $\mathbb{N}_1 \cup \mathbb{N}_2$.

Define a $(0,1)$-matrix $M$, whose rows are indexed by the points of $\mathbb{N}_1$, and whose columns are indexed by the points of $\mathbb{N}_2$. The $(i,j)$-th entry in $M$ is either $1$ or $0$ depending on whether $P_i$ and $Q_j$ are collinear or not.

Then
$$
M = \bordermatrix{
& Q_1 & \cdots & Q_{2q} \cr
   P_1 & J_q & & I_q \cr
   \vdots  \cr
   P_{2q} & I_q & & J_q
}
$$
where $I_q$ is the $q \times q$ identity matrix and $J_q$ is the $q \times q$ all-one matrix. From $M$ we know that for any point $P_i$ of $\mathbb{N}_1$, there are $q+1$ points which are collinear with $P_i$, and they are $Q_{j_{1}},Q_{j_{2}}, \ldots, Q_{j_{q+1}}$. In fact, $\{P_i + x(Q_{j_{k}}-P_i): x \in \F_{q} \}$, where $1\leq k\leq q+1$, contains all lines passing through $P_i$. Similarly, the points of $\mathbb{N}_2$ have the property above. Then each line containing one point of $\mathbb{N}_1 \cup \mathbb{N}_2$ contains exactly two points of $\mathbb{N}_1 \cup \mathbb{N}_2$. Therefore, the $4q$ columns corresponding to $\mathbb{N}_1 \cup \mathbb{N}_2$ are linearly dependent over $\F_2$. It then follows that the minimum distance $d$ of $\C(2,q)$ satisfies $d \leq 4q$.
\end{proof}

The upper bound of the minimum distance of $\C(2,q)$ is derived in Lemma~\ref{lem-upper4q}. Now we are devoted to determine the lower bound. The following lemmas discuss some basic properties about the (common) deleted neighbourhood and will be used to prove that $4q$ is also a lower bound of the minimum distance of $\C(2,q)$.

\begin{lemma}\label{lem-twocommon}
  Let $S_1, S_2$ be two distinct points of $\bs_2(\F_q)$. If $S_1$ and $S_2$ are non-collinear, then $|U(S_1)\cap U(S_2)|=0$ or $q$. In particular,
  \begin{itemize}
    \item[(a)] when $d(S_1,  S_2)=2$, $|U(S_1)\cap U(S_2)|=q$. Furthermore, $\{S_1, S_2\} \cap U(\{S_1, S_2\}) = \emptyset$ and any two points of $U(\{S_1, S_2\})$ are non-collinear;
    \item[(b)] when $d(S_{1},  S_{2})=3$, $|U(S_1)\cap U(S_2)|=0$.
  \end{itemize}
\end{lemma}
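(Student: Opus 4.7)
The plan is to exploit the transitivity of $\gs_2(\F_q)$, which preserves both arithmetic distance and distance, to reduce to $S_1 = \mathbf{0}_2$. Since $S_1 \neq S_2$ and they are non-collinear, Lemma~\ref{lem-s1s2} forces $\ad(S_1, S_2) \neq 1$, so $\ad(S_1, S_2) = 2$ and $S_2$ has rank~$2$. Because the characteristic of $\F_q$ is $2$ throughout Section~\ref{sec-sub2}, Lemma~\ref{lem-add} then gives $d(S_1, S_2) = 2$ iff $S_2$ is non-alternate, and $d(S_1, S_2) = 3$ iff $S_2$ is alternate; proving (a) and (b) will also give the main ``$0$ or $q$'' dichotomy.

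For part~(b), I take $S_2 = \bigl(\begin{smallmatrix} 0 & t \\ t & 0 \end{smallmatrix}\bigr)$ with $t \in \F_q^*$. A common neighbor $S = \bigl(\begin{smallmatrix} a & b \\ b & c \end{smallmatrix}\bigr)$ would satisfy $ac = b^2$ and $ac = (b - t)^2$. Using $(b-t)^2 = b^2 + t^2$ in characteristic~$2$, this forces $t^2 = 0$, contradicting $t \neq 0$. Hence $|U(S_1) \cap U(S_2)| = 0$.

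For part~(a), I write $S_2 = \bigl(\begin{smallmatrix} p & t \\ t & r \end{smallmatrix}\bigr)$ with $(p, r) \neq (0, 0)$ and $pr + t^2 \neq 0$. Subtracting the two rank-$1$ relations $ac = b^2$ and $(a-p)(c-r) = (b-t)^2$ and cancelling in characteristic~$2$, the system reduces to the single linear relation $ar + pc + pr = t^2$. If $p \neq 0$, each $a \in \F_q$ determines $c$ uniquely and $b$ is the unique square root of $ac$ (the Frobenius is bijective in characteristic~$2$); if $p = 0$, the relation forces $a = t^2/r$ and $b$ ranges freely over $\F_q$. Either way I obtain exactly $q$ candidate triples, and a short check using $\rank(S_2) = 2$ confirms that none of them equals $\mathbf{0}_2$ or $S_2$. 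Hence $|U(S_1) \cap U(S_2)| = q$.

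The identity $\{S_1, S_2\} \cap U(\{S_1, S_2\}) = \emptyset$ is immediate from $S_i \notin U(S_i)$. For the non-collinearity of any two common neighbors $T_1, T_2$: if they lie on distinct lines through $S_1 = \mathbf{0}_2$, Lemma~\ref{lem-rank2} directly gives $\rank(T_1 - T_2) = 2$. If instead $T_2 = c T_1$ for some $c \in \F_q^* \setminus \{1\}$, I plan to check that $T_1 - S_2$ and $T_2 - S_2$ are not scalar multiples of each other, since any such proportionality would force $T_1$ to be a scalar multiple of the rank-$2$ matrix $S_2$; Lemma~\ref{lem-rank2} applied to these rank-$1$ matrices on distinct lines through $\mathbf{0}_2$ then yields $\rank(T_1 - T_2) = 2$. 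The main obstacle is the bookkeeping in part~(a), particularly splitting cleanly into the $p \neq 0$ and $p = 0$ sub-cases and verifying that the $q$ parametric solutions are genuinely distinct rank-$1$ points.
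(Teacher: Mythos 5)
Your proof is correct, and its core strategy matches the paper's: use the transitivity of $\gs_2(\F_q)$ to move $S_1$ to $\mathbf{0}_2$, translate adjacency into vanishing determinants, and count solutions via the bijectivity of $x \mapsto x^2$ in characteristic $2$, with parts (a) and (b) together yielding the $0$-or-$q$ dichotomy since non-collinearity forces $\ad(S_1,S_2)=2$ and hence $d(S_1,S_2) \in \{2,3\}$ by Lemma~\ref{lem-add}. You deviate in two places. First, the paper applies a further congruence $X \mapsto P^T X P$ to normalize $S_2$ to $\mathrm{diag}(1,1)$ or the antidiagonal matrix, which collapses your linear relation $ar + pc + pr = t^2$ and its $p \neq 0$ / $p = 0$ sub-cases into the single clean system $z = x+1$, $y^2 = x(x+1)$; your version carries more bookkeeping but is equally valid, and you correctly identify the needed extra check that $\mathbf{0}_2$ and $S_2$ themselves are excluded, which in both formulations reduces to $\det(S_2) \neq 0$. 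Second, for the non-collinearity of two common neighbours the paper invokes the girth-$8$ theorem (Theorem~\ref{thm-girth}) --- a collinear pair of common neighbours would close a $6$-cycle through $S_1$ --- whereas you apply Lemma~\ref{lem-rank2} directly: in the proportional case $T_2 = cT_1$ you show $T_1 - S_2$ and $T_2 - S_2$ cannot be proportional (otherwise $S_2$ would be a scalar multiple of a rank-$1$ matrix), so Lemma~\ref{lem-rank2} forces $\rank(T_1 - T_2) = 2$, contradicting $\rank((1-c)T_1) = 1$ and showing this case is vacuous. Your route is more self-contained --- the girth theorem is itself proved from Lemma~\ref{lem-rank2} --- while the paper's is shorter given that Theorem~\ref{thm-girth} is already in hand; both arguments are sound.
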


\begin{proof}
  The transformation $\sigma_1$: $X \mapsto X-S_1$, is an element of $\gs_2(\F_q)$, and transforms $S_1$ and $S_2$ to $\mathbf{0}_2$ and $S_2 - S_1$, respectively. Since $\rank (S_2-S_1) = 2$, there is a
$P \in \gl_2(\F_q)$ such that
$$
P^T (S_2 - S_1) P = \left(\begin{array}{cccc}
  1 & 0 \\
  0 & 1
\end{array}\right)
\textrm{ or }
\left(\begin{array}{cccc}
  0 & 1 \\
  1 & 0
\end{array}\right).
$$

Let $\sigma_{2}$: $X \mapsto P^T X P$,  which is also an element of $\gs_2(\F_q)$. Then we have
$$
\sigma_2 (\sigma_1 (S_1)) = \mathbf{0}_2
\textrm{ and }
\sigma_2( \sigma_1 (S_2))=\left(\begin{array}{cccc}
  1 & 0 \\
  0 & 1
\end{array}\right)
\textrm{ or }
\left(\begin{array}{cccc}
  0 & 1 \\
  1 & 0
\end{array}\right).
$$

\begin{itemize}
  \item[(a)] When $d(S_{1}, S_{2})=2$, without loss of generality, we may assume that
$$
S_1 = \left(\begin{array}{cccc}
  0 & 0 \\
  0 & 0
\end{array}\right),
\quad S_2 = \left(\begin{array}{cccc}
  1 & 0 \\
  0 & 1
\end{array}\right).
$$
Let
$$
\left(\begin{array}{cccc}
  x & y \\
  y & z
\end{array}\right) \in U(S_1) \cap U(S_2)$$
where $x,  y, z \in \F_q$. Thus, by definition, we have
$$
\left|\begin{array}{cccc}
  x & y \\
  y & z
\end{array}\right| = 0  ~\text{and}~
\left|\begin{array}{cccc}
  x-1 & y \\
  y & z-1
\end{array}\right| = 0.
$$
It then follows that
\begin{equation}\label{eqn-qequ}
z = x + 1, \quad y^2 = x(x+1).
\end{equation}
Since the characteristic of $\F_{q}$ is 2, for each $x \in \F_q$, we can uniquely determine $y$ and $z$ that satisfy (\ref{eqn-qequ}). Therefore $|U(S_1) \cap U(S_2)| = q$. Since $S_1$ and $S_2$ are not collinear, it is clear that $\{S_1, S_2 \} \cap U( \{S_1, S_2 \}) = \emptyset$. Now assume by contradiction that there exist two collinear points $P_1, P_2$ of $U(\{S_1, S_2\})$. Then there is a cycle of length $6$: $S_1 \sim
\{S_1 + x (P_1 - S_1): x \in \F_q\} \sim P_1 \sim \{P_1 + x(P_2 - P_1): x \in \F_q\} \sim P_2 \sim \{P_2 + x(S_1 - P_2): x \in \F_q \} \sim S_1$. This contradicts to Theorem~\ref{thm-girth}. Thus, any two points of $U(\{S_1, S_2\})$ are non-collinear.

\item[(b)] When $d(S_1, S_2)=3$, without loss of generality, we assume that
$$
S_1 = \left(\begin{array}{cccc}
  0 & 0 \\
  0 & 0
\end{array}\right),
\quad
S_2 = \left(\begin{array}{cccc}
  0 & 1 \\
  1 & 0
\end{array}\right).
$$
 Let
 $$
 \left(\begin{array}{cccc}
   x & y \\
   y & z
 \end{array}\right) \in U(S_1) \cap U(S_2) ,
 $$
 where $x,  y, z \in \F_q$. Then we have
 $$
 \left|\begin{array}{cccc}
   x & y \\
   y & z
 \end{array}\right| = 0 ~\text{and}~
 \left|\begin{array}{cccc}
   x & y-1 \\
   y-1 & z
 \end{array}\right| = 0.
 $$
 In this case, we get that $1=0$, which is impossible. Hence, $|U(S_1) \cap U(S_2)| = 0$.
\end{itemize}
\end{proof}

After discussing the common deleted neighbourhood of two non-collinear points, now we consider the property of three non-collinear points.

\begin{lemma}\label{lem-threecommon}
  Let $S_1, S_2, S_3$ be three distinct points of $\bs_2(\F_q)$. Suppose that they are mutually non-collinear. Then $|U(\{S_1, S_2, S_3\})|=0, 1$ or $q$. In particular, if the distance between any two of them is $2$, then $\{S_1, S_2, S_3\} \cap U(\{S_1, S_2, S_3\}) = \emptyset$ and any two points of $U(\{S_1, S_2, S_3\})$ are non-collinear.
\end{lemma}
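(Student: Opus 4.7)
The plan is to reduce to the only substantive case and then perform an explicit determinant computation in characteristic $2$. Since the three points are pairwise non-collinear and the arithmetic distance is at most $2$ on $\bs_2(\F_q)$, we have $\ad(S_i,S_j)=2$ for every pair, and Lemma~\ref{lem-add} then forces $d(S_i,S_j)\in\{2,3\}$. If some pair $(S_i,S_j)$ satisfies $d(S_i,S_j)=3$, Lemma~\ref{lem-twocommon}(b) gives $|U(\{S_1,S_2,S_3\})|\leq |U(S_i)\cap U(S_j)|=0$, so the cardinality is $0$ and this subcase is finished.

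It remains to treat the case in which every pair is at distance $2$. By transitivity of $\gs_2(\F_q)$ and the normalization used in the proof of Lemma~\ref{lem-twocommon}(a), I would assume $S_1=\mathbf{0}_2$ and $S_2=I_2$, so that
\[
U(S_1)\cap U(S_2)=\{P_x:x\in\F_q\},\qquad P_x=\begin{pmatrix}x&t_x\\t_x&x+1\end{pmatrix},
\]
where $t_x\in\F_q$ is the unique square root of $x(x+1)$ (squaring being a bijection in characteristic $2$). Writing $S_3=\begin{pmatrix}a&b\\b&c\end{pmatrix}$ and using the identity $(t_x-b)^2=t_x^2+b^2=x^2+x+b^2$ valid in characteristic $2$, a short calculation gives
\[
\det(P_x-S_3)=(a+c)\,x+(a+ac+b^2).
\]
Since every rank-$1$ symmetric matrix in $\bs_2(\F_q)$ is non-alternate (any alternate rank-$1$ matrix of size $2$ would have to be $\mathbf{0}_2$), $P_x\in U(S_3)$ is equivalent to $P_x\neq S_3$ together with the vanishing of the determinant above. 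Moreover $S_3\neq P_x$ for every $x$, since otherwise $S_3\in U(\{S_1,S_2\})$ would contradict $d(S_1,S_3)=2$.

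Consequently $|U(\{S_1,S_2,S_3\})|$ equals the number of roots in $\F_q$ of the linear polynomial $f(x)=(a+c)x+(a+ac+b^2)$: it is $1$ if $a+c\neq 0$; it is $0$ if $a+c=0$ and $a+ac+b^2\neq 0$; and it is $q$ if both coefficients vanish. This establishes the three possible values $0$, $1$, $q$. The ``in particular'' clause is then immediate: $\{S_1,S_2,S_3\}\cap U(\{S_1,S_2,S_3\})=\emptyset$ by the definition of the deleted neighbourhood, and any two points of $U(\{S_1,S_2,S_3\})\subseteq U(\{S_1,S_2\})$ are non-collinear by Lemma~\ref{lem-twocommon}(a). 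I expect the main obstacle to be carrying out the determinant computation cleanly: it is precisely the collapse of the cross terms in characteristic $2$ that forces $f$ to be linear and yields the three-value dichotomy, so the characteristic-$2$ simplifications must be tracked carefully at every step.
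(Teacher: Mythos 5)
Your proof is correct and follows essentially the same route as the paper's: the same case split via Lemma~\ref{lem-twocommon} when some pair is at distance $3$, the same normalization $S_1=\mathbf{0}_2$, $S_2=I_2$, and the same characteristic-$2$ determinant computation producing the linear polynomial $(a+c)x+(a+ac+b^2)$, whose root count gives the values $0$, $1$, or $q$. Your explicit check that $P_x\neq S_3$ and your derivation of the final clause from the inclusion $U(\{S_1,S_2,S_3\})\subseteq U(\{S_1,S_2\})$ together with Lemma~\ref{lem-twocommon}(a) are minor refinements of details the paper leaves implicit or omits as ``similar.''
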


\begin{proof}
  We consider the following two cases:

  \noindent Case 1:  There exist two points $S_i, S_j$ whose distance is $d(S_{i}, S_{j})=3$, where $1\leq i < j \leq 3$.

  By Lemma~\ref{lem-twocommon}, we get $U(\{S_i, S_j\}) = \emptyset$. Clearly, $U( \{ S_1, S_2, S_3 \} )\subseteq U(\{S_i, S_j\})$. Thus,  $|U(\{S_1, S_2, S_3\})|$ $ = 0$.

\noindent Case 2: The distance between any two points of $\{ S_1, S_2, S_3 \}$ is $2$.

According to Lemma~\ref{lem-twocommon}, without loss of generality, we may assume
$$
S_1 = \left(\begin{array}{cccc}
  0 & 0 \\
  0 & 0
\end{array}\right),
S_2 = \left(\begin{array}{cccc}
  1 & 0 \\
  0 & 1
\end{array}\right),
S_3 = \left(\begin{array}{cccc}
  a & b \\
  b & c
\end{array}\right),
$$
where $a, b, c \in \F_q, (a,c)\neq (0,0)$ and $(a,c)\neq (1,1).$ Notice that
$$
U(\{S_1, S_2, S_3\}) = \left\{\left(\begin{array}{cccc}
  x & y \\
  y & x+1
\end{array}\right) : y^2 = x(x+1),  x, y\in \F_q\right\}\cap U(S_3).
$$

By the definition of common deleted neighborhood, we have
$$
\left|\left(\begin{array}{cccc}
  x & y \\
  y & x+1
\end{array}\right)-\left(\begin{array}{cccc}
  a & b \\
  b & c
\end{array}\right)\right| = (a+c)x+a+ac+b^{2}=0.$$
Hence we need only to determine the number of solutions of this equation.
\begin{itemize}
  \item If $a=c$ and $a+ac+b^2 = 0$, then $x$ runs through $\F_q$. In this case, $|U(\{S_1, S_2, S_3\})|=q$;
  \item If $a=c$ and $a+ac+b^{2}\neq0$, then the equation cannot be solved for $x$ in $\F_{q}$. It follows that $|U(\{S_1, S_2, S_3\})|=0$;
  \item If $a\neq c$, then $x$ is uniquely determined. Hence in this case $|U(\{S_1, S_2, S_3\})|=1$.
\end{itemize}

The rest part of the proof is similar to that of Lemma~\ref{lem-twocommon} and is thus omitted here. This completes the proof of the lemma.
\end{proof}

\begin{remark}\label{rmk-common}
  If $|U(\{S_1, S_2, S_3\})| = q$, where $q > 2$ is a power of $2$, let $U(\{S_1, S_2, S_3\})=\{P_1, P_2, \ldots, P_q \}$ and choose arbitrarily three distinct points of this set, say $P_i, P_j$ and $P_k$, where $1 \leq i, j, k\leq q$. By the definition of the common deleted neighbourhood, $\{S_1, S_2, S_3\} \subseteq U(\{P_i, P_j, P_k \})$. From Lemma~\ref{lem-threecommon}, it follows that $|U(\{P_i, P_j, P_k \})| = q$. On the other hand, we have $U(\{P_i, P_j, P_k\}) \subseteq U(\{P_i, P_j\})$ and
  $|U(\{P_i, P_j\})|=q$ (by Lemma~\ref{lem-twocommon}). Hence, $U(\{P_i, P_j, P_k\}) = U(\{P_i, P_j\})$. Since $k$ can be chosen arbitrarily from $\{1,2,\ldots,q\}\setminus\{i,j\}$, we conclude that  $U(\{P_1, P_2,\ldots, P_q \}) = U(\{P_i, P_j \})$. This indicates that, if a point is collinear with any two points of $\{ P_1, P_2,\ldots, P_q \}$, then the point is collinear with every point of $\{ P_1, P_2,\ldots, P_q \}$. Let $U(\{P_i, P_j\}) = \{S_1, S_2,\ldots, S_q\}$, then it is checked that this property also holds for the set $\{S_1, S_2,\ldots, S_q\}$. It is obvious that $\{P_1, P_2, \ldots, P_q \} \cap \{S_1, S_2,\ldots, S_q\} = \emptyset$. With similar argument as in the proof of Lemma~\ref{lem-twocommon}, we can show that any two points of $\{P_1, P_2, \ldots, P_q \}$ are non-collinear, and the same is true for $\{S_1, S_2,\ldots, S_q\}$.
\end{remark}

In Lemma~\ref{lem-upper4q}, we have found an upper bound of the minimum distance $\C(2,q)$. Now we are ready to derive its lower bound.

\begin{lemma}\label{lem-lower4q}
  Let $\mathbb{A}$ be the set of points that corresponds to an arbitrarily chosen nonempty set of linearly dependent columns of $H(2,q)$. Then $|\mathbb{A}|\geq 4q$.
\end{lemma}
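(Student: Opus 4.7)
The plan is to fix a reference point $S_1\in\mathbb{A}$, organise the points of $\mathbb{A}$ by graph distance from $S_1$, and combine the parity constraint ``every line contains an even number of $\mathbb{A}$-points'' (which is precisely the condition for $\mathbb{A}$ to index a set of linearly dependent columns of $H(2,q)$) with the girth-$8$ property of $G(2,q)$ (Theorem~\ref{thm-girth}) and the common-neighbourhood structure provided by Lemmas~\ref{lem-twocommon} and~\ref{lem-threecommon} and Remark~\ref{rmk-common}.

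First I would fix any $S_1\in\mathbb{A}$ and set $\mathbb{A}_1=\mathbb{A}\cap U(S_1)$ and $\mathbb{A}_2=\mathbb{A}\setminus(\{S_1\}\cup\mathbb{A}_1)$. Each of the $q+1$ lines through $S_1$ carries an even, hence $\geq 2$, number of $\mathbb{A}$-points, so $|\mathbb{A}_1|\geq q+1$; choosing one point $T_i\in\mathbb{A}_1$ per line through $S_1$ yields $q+1$ pairwise non-collinear points $T_1,\dots,T_{q+1}$ (a collinearity among two of them would complete a $6$-cycle through $S_1$, forbidden by girth~$8$). For each $T_i$, the $q$ remaining lines through $T_i$ each contribute $\geq 1$ further $\mathbb{A}$-point, which by the same girth argument must lie in $\mathbb{A}_2$; this yields at least $q(q+1)$ collinearity edges between $\{T_1,\dots,T_{q+1}\}$ and $\mathbb{A}_2$. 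For $P\in\mathbb{A}_2$, setting $g(P)=|\{i:T_i\sim P\}|$, I have $g(P)\leq|U(S_1)\cap U(P)|\leq q$ by Lemma~\ref{lem-twocommon}(a), with $g(P)=0$ whenever $d(S_1,P)=3$.

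I would then split into two cases. In Case~A, $g(P)\leq 2$ for every $P\in\mathbb{A}_2$; then $2|\mathbb{A}_2|\geq q(q+1)$, so $|\mathbb{A}|\geq 1+(q+1)+q(q+1)/2=(q^2+3q+4)/2$, which is $\geq 4q$ precisely when $(q-1)(q-4)\geq 0$, covering all $q\geq 4$. In Case~B, some $P\in\mathbb{A}_2$ has $g(P)\geq 3$: fixing $T_{i_1},T_{i_2},T_{i_3}\in\mathbb{A}_1\cap U(P)$, the fact that $\{S_1,P\}\subseteq U(\{T_{i_1},T_{i_2},T_{i_3}\})$ forces $|U(\{T_{i_1},T_{i_2},T_{i_3}\})|=q$ by Lemma~\ref{lem-threecommon}, and Remark~\ref{rmk-common} then produces two dual ``blocks'' $\mathcal{P}\ni S_1,P$ and $\mathcal{T}\supseteq\{T_{i_1},T_{i_2},T_{i_3}\}$ of $q$ pairwise non-collinear points each, with every $P_i\in\mathcal{P}$ collinear with every $T_j\in\mathcal{T}$ via a unique bipartite line $\ell_{ij}$. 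Applying the parity condition to the $q^2$ bipartite lines $\ell_{ij}$ and to the non-bipartite lines through each $\mathbb{A}$-point of $\mathcal{P}\cup\mathcal{T}$, I would argue that the total number of $\mathbb{A}$-points forced is at least $4q$; the weight-$4q$ codeword exhibited in Lemma~\ref{lem-upper4q} has exactly this ``doubled block'' shape and serves as the guiding picture. The remaining case $q=2$ is handled separately: there each line has only two points, so the parity condition forces $\mathbb{A}$ to be a union of connected components of the collinearity graph $\Gamma(2,q)$, and connectedness of $\Gamma(2,2)$ then forces $\mathbb{A}$ to consist of all $8=4q$ points.

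The main obstacle will be the detailed bookkeeping in Case~B: I will need to track how the half-block $\mathcal{P}\cup\mathcal{T}$ forced by Remark~\ref{rmk-common} interacts with the parity constraint on the $2q$ non-bipartite lines through $\mathbb{A}$-points of $\mathcal{P}\cup\mathcal{T}$, and rule out degenerate overlaps in which the demanded extra $\mathbb{A}$-points collapse back into $\mathcal{P}\cup\mathcal{T}$, so that a second block of $2q$ additional points must materialise, matching the extremal configuration of Lemma~\ref{lem-upper4q}.
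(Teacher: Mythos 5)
Your route is genuinely different from the paper's and, once completed, is arguably cleaner. The paper's proof works with a globally extremal configuration: among all pairwise non-collinear subsets $\mathbb{A}_i\subseteq\mathbb{A}$ with $|U(\mathbb{A}_i)|=q$ it fixes one, $\mathbb{A}_{\max}$ of size $s$, maximizing first $s$ and then $t=|U(\mathbb{A}_{\max})\cap\mathbb{A}|$; it then builds disjoint forced sets $\mathbb{C}_{\max},\mathbb{D}_{\max}$ to get $|\mathbb{A}|\geq(q+2)(s+t)-2st$, and must fight the degenerate cases $t=1$ and $t=0$ with an additional ad hoc set $\mathbb{E}_{\max}$, using maximality of $(s,t)$ to derive contradictions. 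Your anchored dichotomy avoids those hard cases entirely: Case~A is a complete and correct edge count (and your split ``$q\geq 4$ or $q=2$'' is exhaustive precisely because the standing assumption for this lemma is that $\F_q$ has characteristic $2$, so $q$ is a power of $2$; your $q=2$ argument via connectedness of $\Gamma(2,2)$, established in the proof of Theorem~\ref{thm-nq2}, is also fine), while Case~B automatically produces blocks $\mathcal{P}\ni S_1,P$ and $\mathcal{T}\supseteq\{T_{i_1},T_{i_2},T_{i_3}\}$ containing $t'\geq 2$ and $s'\geq 3$ points of $\mathbb{A}$ respectively, which lands you exactly in the paper's easy ``Case 1''.

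The gap is that Case~B is a plan, not a proof, and your stated target there --- that ``a second block of $2q$ additional points must materialise'' --- is not quite the right statement to aim for: when $s',t'<q$ one cannot force $2q$ extra points, and one does not need to. What you need, and what is true, is the following. Every $T\in\mathbb{A}\cap\mathcal{T}$ lies on $q+1$ lines, of which exactly $q$ meet $\mathcal{P}$ (in one point each, since $\mathcal{P}$ is pairwise non-collinear) and one misses $\mathcal{P}$; each of the $q-t'$ lines meeting $\mathcal{P}\setminus\mathbb{A}$ and the one line missing $\mathcal{P}$ carries, by the even-intersection (parity) property, at least one point of $\mathbb{A}$ outside $\mathcal{P}\cup\mathcal{T}$, so $T$ forces $q+1-t'$ outside points. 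These sets are disjoint as $T$ ranges over $\mathbb{A}\cap\mathcal{T}$, because an outside point collinear with two points of $\mathcal{T}$ would lie in $\mathcal{P}$ by Remark~\ref{rmk-common}; symmetrically each $P'\in\mathbb{A}\cap\mathcal{P}$ forces $q+1-s'$ outside points, pairwise disjoint; and a point forced from both sides would form a triangle with some $T$ and $P'$ (three distinct lines, since the forcing lines through $T$ and through $P'$ avoid $\mathbb{A}$-points of the opposite block), i.e.\ a $6$-cycle, contradicting Theorem~\ref{thm-girth}. Hence $|\mathbb{A}|\geq s'+t'+s'(q+1-t')+t'(q+1-s')=(q+2)(s'+t')-2s't'$, and since $2\leq\sqrt{s't'}\leq q$ this is at least $2(q+2)\sqrt{s't'}-2s't'=4q+(\sqrt{s't'}-2)(2q-2\sqrt{s't'})\geq 4q$ --- the same AM--GM identity the paper uses. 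So your proposal does close, but the missing bookkeeping is essentially the paper's counting and disjointness argument specialized to its easiest case; the genuine novelty of your approach is that the anchoring guarantees $s'\geq 3$, $t'\geq 2$, so the paper's troublesome cases $t\in\{0,1\}$ never arise.
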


Since the proof of this lemma is lengthy, we put it in Appendix.

As an immediate consequence of Lemma~\ref{lem-upper4q} and Lemma~\ref{lem-lower4q}, the minimum distance of $\C(2,q)$ is $4q$. We are now ready to present the proof of Theorem~\ref{thm-distn2q}.

\begin{proof}[Proof of Theorem~\ref{thm-distn2q}]
The conclusion on the minimum distance directly follows from Lemma~\ref{lem-upper4q} and Lemma~\ref{lem-lower4q}. By the definition of the stopping distance, we obtain that the minimum distance of $\C(2,q)$ is an upper bound of the stopping distance. On the other hand, the proof in Lemma~\ref{lem-lower4q} also holds for an arbitrary stopping set of $\C(2,q)$. Consequently, we get the lower bound $4q$ of the stopping distance. This completes the proof.
\end{proof}

The discussion above determines the minimum distance and the stopping distance of $\C(2,q)$, where $q$ is a power of $2$. However, when $q$ is an odd prime power, the situation is more complicated, and we cannot explicitly determine these two properties of $\C(2,q)$. Using magma programs, it is verified that the minimum distance of $\C(2,3)$ is $12$ and the minimum distance of $\C(2,5)$ is $20$. For bigger $q$, it is still mysterious and even impossible to check by magma programs. For general $n$ and $q$, some lower bounds are obtained (see Theorem~\ref{thm-gen}).

When $q = 2$, for general $n$, we have the following theorem on both the minimum distance and the stopping distance. This indicates that $\C(n,2)$ contains only two codewords: the all-zero and the all-one codewords.

\begin{theorem}\label{thm-nq2}
  The minimum distance and the stopping distance of $\C(n,2)$ are both $2^{\frac{n(n+1)}{2}}$.
 \end{theorem}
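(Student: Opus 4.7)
The plan is to exploit that $q=2$ is special: every line of $\bs_n(\F_2)$, being a maximal set of rank~$1$, contains exactly $q=2$ points, so every row of $H(n,2)$ has weight~$2$. Consequently, a binary vector with support $\mathbb{A}$ is a codeword of $\C(n,2)$ if and only if every row has an even number of $1$'s in $\mathbb{A}$; weight~$2$ forces this count to be either $0$ or $2$. The stopping-set condition (no row has exactly one $1$ in $\mathbb{A}$) yields the same restriction: for every line $\ell$, either $\ell\cap\mathbb{A}=\emptyset$ or $\ell\subseteq\mathbb{A}$. Thus codewords and stopping sets of $\C(n,2)$ coincide and can be analyzed simultaneously.

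Reformulating this line condition pointwise via Lemma~\ref{lem-s1s2}: whenever $P\in\mathbb{A}$ and $P'\in\bs_n(\F_2)$ is adjacent to $P$, the unique line through $P,P'$ intersects $\mathbb{A}$, hence is contained in $\mathbb{A}$, which forces $P'\in\mathbb{A}$. Therefore $\mathbb{A}$ is closed under adjacency, i.e., it is a union of connected components of the graph $\Gamma(n,2)$ whose vertex set is $\bs_n(\F_2)$ and whose edges are the adjacent pairs.

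The key step is then to argue that $\Gamma(n,2)$ is connected, so that the only non-empty such $\mathbb{A}$ is all of $\bs_n(\F_2)$. This follows at once from Lemma~\ref{lem-add}: for every pair $S,S'\in\bs_n(\F_2)$ the graph distance satisfies $d(S,S')\leq \ad(S,S')+1\leq n+1$, so in particular it is finite. (One can also argue directly by reducing an arbitrary symmetric matrix to $\mathbf{0}_n$ through successive rank-$1$ subtractions, with one additional step in the alternate case.) Finally, $\mathbb{A}=\bs_n(\F_2)$ itself does satisfy both conditions because every row of $H(n,2)$ has weight $2\equiv 0\pmod 2$, so the all-ones vector is simultaneously a codeword and a stopping set. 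Combining these observations yields $|\bs_n(\F_2)|=2^{n(n+1)/2}$ for both the minimum distance and the stopping distance.

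The only nontrivial ingredient is the connectivity of $\Gamma(n,2)$, and this is handed to us for free by Lemma~\ref{lem-add}; the rest of the argument is essentially a bookkeeping observation about rows of weight~$2$. I therefore do not expect any substantial obstacle.
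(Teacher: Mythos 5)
Your proof is correct, and it differs from the paper's precisely at the one step where real work is needed. Your skeleton --- rows of $H(n,2)$ have weight $2$, hence the support $\mathbb{A}$ of a nonzero codeword (equivalently, a nonempty stopping set; the two notions coincide here exactly as you observe) meets every line in $0$ or $2$ points, hence is closed under adjacency and is a union of connected components of $\Gamma(n,2)$, while the all-ones vector is itself a codeword --- matches the paper's proof. The difference is how connectivity of $\Gamma(n,2)$ is established: you get it in one line from Lemma~\ref{lem-add}, since in characteristic $2$ that lemma gives $d(S,S')\leq \ad(S,S')+1\leq n+1$ for every pair, and finiteness of the distance for all pairs is precisely connectivity. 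The paper instead proves connectivity from scratch: it puts $S'-S$ into congruence normal form over $\F_2$ (either $\mathrm{diag}(I_r,\mathbf{0}_{n-r})$ or the block-alternating form), walks between the diagonal forms by rank-one steps, and exhibits an explicit short path to handle the alternate case --- in effect re-deriving, for $q=2$, exactly the finiteness statement you quote from Wan's Proposition 5.5. Your version buys brevity and makes the codeword/stopping-set coincidence explicit; the paper's version buys self-containedness, in that the central step does not lean on the cited but unproved Lemma~\ref{lem-add}. One caveat you should make explicit if you write this up: your deduction requires reading Lemma~\ref{lem-add} as asserting that $d(S,S')$ is well-defined (finite) for \emph{every} pair $S,S'$ --- which Wan's proposition does assert --- since otherwise the stated equalities would be vacuous on a hypothetical disconnected pair and the one-line appeal to the lemma would be circular.
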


 \begin{proof}
    Each row of $H(n,2)$ has 2 ``ones'', so all columns of $H(n,2)$ are linearly dependent over $\F_2$. Therefore, the minimum distance of $\C(n,2)$ is at most $2^{\frac{n(n+1)}{2}}$.

    We complete the proof by proving that $\Gamma(n,2)$ is connected. Recall that $\Gamma(n,2)$ is an undirected graph with $\bs_n (\F_2)$ as its vertex set. Two vertices $S$ and $S'$ of $\Gamma$ are adjacent if and only if $\rank (S-S')=1$. To prove the connectness of $\Gamma(n,2)$, it is sufficient to prove that there exist a path for any two vertices $S, S'$ of $\Gamma(n,2)$. If $S$ is adjacent to $S'$, then $S, S'$ is the path connecting $S$ and $S^{'}$. If $S$ and $S'$ are two non-adjacent vertices, suppose that $\rank(S' - S) = r > 1$. If $r$ is odd, then there exists a $P\in \gl_n (\F_2)$ such that
    $$
    P^{T}(S' - S)P = \left(\begin{array}{cccc}
      I_{r} & \\
      &  \mathbf{0}_{n-r}
    \end{array}\right).
    $$
    Let $\sigma_1: X \mapsto X-S$ and $\sigma_2: X \mapsto P^TXP$. Then $\sigma_1$ and $\sigma_2$ are both elements of $\gs_n (\F_2)$ and
    $$
    (\sigma_{2}\sigma_{1})(S)  =  \mathbf{0}_n \quad \textrm{and}\quad  (\sigma_{2}\sigma_{1})(S') = \mathrm{diag}(I_r, \mathbf{0}_{n-r}).
    $$
    $$
    S = (\sigma_{2}\sigma_{1})^{-1}(\mathbf{0}_n),\quad (\sigma_{2}\sigma_{1})^{-1}(\mathrm{diag}(I_1, \mathbf{0}_{n-1})), \ldots,(\sigma_{2}\sigma_{1})^{-1}(\mathrm{diag}(I_r, \mathbf{0}_{n-r})) = S'
    $$
    is the path from $S$ to $S'$. If $r$ is even, then there exists an element $P\in \gl_n(\F_2)$ such that
    $$
    P^{T}(S-S')P = \left(\begin{array}{cccc}
      I_r  &   \\
      &   \mathbf{0}_{n-r}
    \end{array}\right)\quad \textrm{or} \quad
    \left(\begin{array}{cccccccc}
      0 & 1 \\
      1 & 0 \\
      & & \ddots \\
      & & & 0 & 1 \\
      & & & 1 & 0 \\
      & & & & & \mathbf{0}_{n-r}
    \end{array}\right).
      $$
    Notice that
    $$
    \mathbf{0}_n,\quad
    \left(\begin{array}{cccc}
      1 & 1 \\
      1 & 1
    \end{array}\right)_{n},\quad
    \left(\begin{array}{cccc}
      0 & 1 \\
      1 & 1
    \end{array}\right)_{n}, \quad
    \left(\begin{array}{cccc}
      0 & 1 \\
      1 & 0 \end{array}\right)_{n}
      $$
      is a path containing
      $$
      \mathbf{0}_{n} \textrm{ and }
      \left(\begin{array}{cccc}
	0 & 1 \\
	1 & 0
      \end{array}\right)_{n}.
      $$
    By this fact and using the method above, we can always find a path from $S$ to $S'$ in this case. Thus, $\Gamma(n,2)$ is connected.

    Let $\mathbb{A}$ be the non-empty point set corresponding to a set of linearly dependent columns of $H(n,2)$ over $\F_2$. By the facts that $\Gamma(n,2)$ is connected and every row of $H(n,2)$ has 2 ``ones'', we obtain that $\mathbb{A}$ must contain all points of $\bs_n(\F_2)$. This means that $2^{\frac{n(n+1)}{2}}$ is a lower bound of the minimum distance of $\C(n,2)$. Hence, the minimum distance of $\C(n,2)$ is $2^{\frac{n(n+1)}{2}}$. The result on the stopping distance follows from that on the minimum distance similar to the proof of Theorem~\ref{thm-distn2q}.

  \end{proof}

\begin{example}\label{ex-22}
Let $n=2, q=2$. The vertex set of the bipartite graph $G(2,2)$ is  $V(2,2) \cup L(2,2)$.

\noindent The point set $V(2,2)$ is
\begin{align*}
  V(2, 2)=\Bigg\{
    &v_1=\left(\begin{array}{cc} 0&0\\0&0\end{array}\right),
    v_2=\left(\begin{array}{cc} 1&0\\0&0\end{array}\right),
    v_3=\left(\begin{array}{cc} 1&1\\1&1\end{array}\right),
    v_4=\left(\begin{array}{cc} 0&0\\0&1\end{array}\right), \\
    &v_5=\left(\begin{array}{cc} 0&1\\1&1\end{array}\right),
    v_6=\left(\begin{array}{cc} 1&0\\0&1\end{array}\right),
    v_7=\left(\begin{array}{cc} 1&1\\1&0\end{array}\right),
    v_8=\left(\begin{array}{cc} 0&1\\1&0\end{array}\right)
     \Bigg\},
\end{align*}
The line set $L(2,2)$ is
\begin{align*}
  L(2, 2)=\Bigg\{
    &\ell_1=\left\{\left(\begin{array}{cc} 0&0\\0&0\end{array}\right),
    \left(\begin{array}{cc} 1&0\\0&0\end{array}\right)\right\},\quad
    \ell_2=\left\{\left(\begin{array}{cc} 0&0\\0&0\end{array}\right),
    \left(\begin{array}{cc} 1&1\\1&1\end{array}\right)\right\}, \\
    & \ell_3=\left\{\left(\begin{array}{cc} 0&0\\0&0\end{array}\right),
    \left(\begin{array}{cc} 0&0\\0&1\end{array}\right)\right\},\quad
    \ell_4=\left\{\left(\begin{array}{cc} 1&0\\0&0\end{array}\right),
    \left(\begin{array}{cc} 0&1\\1&1\end{array}\right)\right\}, \\
    & \ell_5=\left\{\left(\begin{array}{cc} 1&0\\0&0\end{array}\right),
    \left(\begin{array}{cc} 1&0\\0&1\end{array}\right)\right\},\quad
    \ell_6=\left\{\left(\begin{array}{cc} 1&1\\1&1\end{array}\right),
    \left(\begin{array}{cc} 0&1\\1&1\end{array}\right)\right\}, \\
    & \ell_7=\left\{\left(\begin{array}{cc} 1&1\\1&1\end{array}\right),
    \left(\begin{array}{cc} 1&1\\1&0\end{array}\right)\right\},\quad
    \ell_8=\left\{\left(\begin{array}{cc} 0&0\\0&1\end{array}\right),
    \left(\begin{array}{cc} 1&0\\0&1\end{array}\right)\right\}, \\
    & \ell_9=\left\{\left(\begin{array}{cc} 0&0\\0&1\end{array}\right),
    \left(\begin{array}{cc} 1&1\\1&0\end{array}\right)\right\},\quad
    \ell_{10}=\left\{\left(\begin{array}{cc} 0&1\\1&1\end{array}\right),
    \left(\begin{array}{cc} 0&1\\1&0\end{array}\right)\right\}, \\
    & \ell_{11}=\left\{\left(\begin{array}{cc} 1&0\\0&1\end{array}\right),
    \left(\begin{array}{cc} 0&1\\1&0\end{array}\right)\right\},\quad
    \ell_{12}=\left\{\left(\begin{array}{cc} 1&1\\1&0\end{array}\right),
    \left(\begin{array}{cc} 0&1\\1&0\end{array}\right)\right\}
    \Bigg\}.
\end{align*}
Then the adjacent matrix $H(2,2)$ of the bipartite graph $G(2,2)$ is
$$
H(2, 2) = \bordermatrix{
 & v_1 & v_2 & v_3 & v_4 & v_5 & v_6 & v_7 & v_8 \cr
   \ell_1 & 1 & 1 & 0 & 0 & 0 & 0 & 0 & 0 \cr
   \ell_2 & 1 & 0 & 1 & 0 & 0 & 0 & 0 & 0 \cr
   \ell_3 & 1 & 0 & 0 & 1 & 0 & 0 & 0 & 0 \cr
   \ell_4 & 0 & 1 & 0 & 0 & 1 & 0 & 0 & 0 \cr
   \ell_5 & 0 & 1 & 0 & 0 & 0 & 1 & 0 & 0 \cr
   \ell_6 & 0 & 0 & 1 & 0 & 1 & 0 & 0 & 0 \cr
   \ell_7 & 0 & 0 & 1 & 0 & 0 & 0 & 1 & 0 \cr
   \ell_8 & 0 & 0 & 0 & 1 & 0 & 1 & 0 & 0 \cr
   \ell_9 & 0 & 0 & 0 & 1 & 0 & 0 & 1 & 0 \cr
   \ell_{10} & 0 & 0 & 0 & 0 & 1 & 0 & 0 & 1 \cr
   \ell_{11} & 0 & 0 & 0 & 0 & 0 & 1 & 0 & 1  \cr
   \ell_{12} & 0 & 0 & 0 & 0 & 0 & 0 & 1 & 1
}$$

Applying elementary transformation on $H(2,2)$, we obtain $\rank (H(2,2))=7$. The columns of $H^{T}(2,2)$ corresponding to $\ell_1, \ell_3, \ell_5, \ell_8$, are linearly dependent over $\F_{2}$. It is not hard to verify that any less than $4$ columns are linearly independent over $\F_{2}$. So $\C^{T}(2, 2)$ is a $[12, 5, 4]$ linear code. It is evident that $H(2,2)$ has $8$ linearly dependent columns, but any less than $8$ columns are linearly independent, so $\C(2,2)$ is an $[8, 1, 8]$ code.
\end{example}

\subsection{The Dimensions of $\C(n,q)$ and $\C^{T}(n,q)$}\label{sec-dim}

In this section, we give upper bounds on the dimensions of the LDPC codes $\C(n,q)$ and $\C^{T}(n,q)$ for general $n$ and $q$.

\begin{theorem}\label{thm-dim}
  $\C(n,q)$ is a $[q^{\frac{n(n+1)}{2}}, k_1]$ code with $k_1 \leq q^{\frac{n^2 - n}{2}}(q-1)^n$ and $\C^{T}(n,q)$ is a $[\frac{q^n-1}{q-1}q^{\frac{n^{2}+n-2}{2}}, k_2]$ code with $k_2 \leq \frac{q^n-1}{q-1} q^{\frac{n^2 + n - 2}{2}} - q^{\frac{n(n+1)}{2}} + q^{\frac{n^2 - n}{2}} (q-1)^n$.
\end{theorem}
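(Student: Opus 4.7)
The plan is to unify the two dimension bounds into a single inequality on the $\F_2$-rank of $H(n,q)$. Since $k_1 = q^{n(n+1)/2} - \rank_{\F_2}(H(n,q))$, $k_2 = \frac{q^n - 1}{q - 1} q^{(n^2 + n - 2)/2} - \rank_{\F_2}(H^T(n,q))$, and $\rank_{\F_2}(H(n,q)) = \rank_{\F_2}(H^T(n,q))$, both stated inequalities are equivalent to
\[
\rank_{\F_2}(H(n,q)) \ \geq\ q^{(n^2-n)/2}\bigl(q^n - (q-1)^n\bigr).
\]
I interpret the right-hand side as $|V_0|$, where $V_0 \subseteq \bs_n(\F_q)$ is the set of symmetric matrices with at least one zero diagonal entry: there are $q^n - (q-1)^n$ admissible diagonals, each extending to $q^{n(n-1)/2}$ symmetric matrices.

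To establish the rank bound, I will construct, for each $S \in V_0$, a distinguished line $\ell_S$ through $S$ whose remaining $q-1$ points all lie in $V_1 := \bs_n(\F_q) \setminus V_0$. Let $Z_S = \{i : S_{ii} = 0\}$, which is nonempty by assumption; take $u \in \F_q^n$ defined by $u_i = 1$ for $i \in Z_S$ and $u_i = 0$ otherwise; and set $T_S = uu^T$, a nonzero rank-$1$ symmetric matrix. By Lemma~\ref{lem-s1s2}, $\ell_S := \{S + xT_S : x \in \F_q\}$ is a maximal set of rank $1$ containing $S$. The $i$-th diagonal entry of $S + xT_S$ equals $S_{ii} \neq 0$ when $i \notin Z_S$ and equals $x$ when $i \in Z_S$, so every diagonal entry is nonzero as soon as $x \neq 0$, placing $S + xT_S$ in $V_1$. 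Thus $\ell_S \cap V_0 = \{S\}$.

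Next, the lines $\{\ell_S : S \in V_0\}$ are pairwise distinct, because $\ell_{S_1} = \ell_{S_2}$ would force $S_1, S_2 \in \ell_{S_1} \cap V_0 = \{S_1\}$. Consequently, the $|V_0| \times |V_0|$ submatrix of $H(n,q)$ with rows indexed by $\{\ell_S\}_{S \in V_0}$ and columns indexed by $V_0$ has a unique $1$ in each row---at column $S$ in row $\ell_S$---so it is a permutation matrix. This forces $\rank_{\F_2}(H(n,q)) \geq |V_0|$ and delivers both dimension bounds simultaneously.

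I expect the main technical point to be choosing $T_S$ so that adding a nonzero multiple of $T_S$ to $S$ both preserves the already-nonzero diagonal entries of $S$ and fills in the zero ones. The clean resolution uses the fact that every rank-$1$ symmetric matrix over $\F_q$ can be written as $d \cdot uu^T$, whose diagonal $(du_1^2, \ldots, du_n^2)$ has support exactly $\{i : u_i \neq 0\}$; matching the support of $u$ with $Z_S$ is precisely the flexibility the argument needs.
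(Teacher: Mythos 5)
Your proof is correct, but it takes a genuinely different route from the paper's. Both arguments reduce the two stated bounds to the single inequality $\rank_{\F_2}(H(n,q)) \geq q^{\frac{n^2-n}{2}}\bigl(q^n-(q-1)^n\bigr)$, and both establish it by attaching one line to each point of $V_0$ (your set of symmetric matrices with at least one zero diagonal entry) and showing the corresponding rows are linearly independent; indeed, the paper's line sets $L'_1,\ldots,L'_n$ are in bijection with $V_0$, the line attached to $S$ being $\ell(S,i)=\{S+xI_{ii}: x\in \F_q\}$ where $i$ is the \emph{first} index with $a_{ii}=0$. The essential difference is the choice of rank-one direction. The paper's direction $I_{ii}$ repairs only the first zero diagonal entry, so its line can re-enter $V_0$ (a later diagonal entry may still vanish); consequently the paper must run an inductive argument over $L'_1\cup\cdots\cup L'_i$, defeating any purported $\F_2$-dependency by locating a point (such as $S-a_{11}I_{11}$) that is covered exactly once. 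Your direction $T_S=uu^T$, with $u$ the $0$-$1$ indicator of \emph{all} zero diagonal positions, repairs every zero diagonal entry simultaneously, so $\ell_S\cap V_0=\{S\}$; the submatrix of $H(n,q)$ on rows $\{\ell_S\}_{S\in V_0}$ and columns $V_0$ is then a permutation matrix, and linear independence is immediate, with no induction and no case analysis. The two constructions give the identical count $|V_0|=q^{\frac{n^2-n}{2}}\bigl(q^n-(q-1)^n\bigr)$, hence the same bound; what your variant buys is a one-step independence argument, at the price of a less coordinate-wise (but still elementary) choice of lines.
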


\begin{proof}
  We prove this theorem by showing that the rank of $H(n,q)$ is at least $q^{\frac{n^2 - n}{2}} \left( q^n - (q-1)^n \right)$. Let $S=(a_{ij})_{n\times n}$ be any element of $\bs_n(\F_q)$. For each $1\leq i\leq n$, define a subset $L_i$ of the line set $L(n,q)$ as follows:
  $$
  L_i = \{\ell(S,i): S\in \bs_n(\F_q) \textrm{ and } a_{ii}=0\},
  $$
  where $\ell(S,i)=\{S+xI_{ii}: x \in \F_q\}$. Notice that each $L_i$ contains $q^{\frac{n^2+n-2}{2}}$ lines, of which any two do not intersect. This implies that the rows corresponding to any nonempty subset of $L_i$ are linearly independent over $\F_2$.

  Take $L'_2 = \{\ell(S,2): S\in \bs_n(\F_q),~a_{22}=0 \text{ and } a_{11} \neq 0\}$. We claim that the rows corresponding to any nonempty subset of $L_1\cup L'_2$ are linearly independent over $\F_2$. Suppose on the contrary that there are $k(\geq 2)$ lines of $L_1\cup L'_2$, such that the rows corresponding to these $k$ lines are linearly dependent. Denote by $L$ the set of these $k$ lines. Then there is at least one line in $L$, say $\ell_2$, contained in $L'_2$. Let $\ell_2 =
  \ell(S,2)$, where $S\in \bs_n(\F_q)$, $a_{22}=0$ and $a_{11} \neq 0$. Then by linear dependence and the property of $L_2$, there must be a line $\ell_1\in L_1\cap L$ such that $S\in \ell_1$. However, it is easy to see that the point $S-a_{11}I_{11}\in \ell_1$ lies on no lines of $L_1\cup L'_2$ other than $\ell_1$. This leads to a contradiction to the linear dependence of rows corresponding to the $k$ lines of $L$.

  Now take $L'_3=\{\ell(S,3): S \in \bs_n(\F_q),~a_{33}=0,~a_{22}\neq 0  \textrm{ and } a_{11}\neq 0\}$. With similar argument as above, we show that the rows corresponding to any nonempty subset of $L_1\cup L'_2\cup L'_3$ are linearly independent over $\F_2$. Inductively, for $3 < i \leq  n$, we can define $L'_i = \{\ell(S,i): S\in \bs_n(\F_q), ~ a_{ii}=0~\text{and}~a_{jj}\neq 0 \textrm{ for } 1\leq j < i \}$ and show that the rows corresponding to any nonempty subset of
  $\cup_{j=1}^{i}L'_j$ are linearly independent over $\F_2$, where $L'_1=L_1$.

  In fact, the rows corresponding to any nonempty subset of $\cup_{i=1}^{n} L'_i$ are linearly independent over $\F_2$, which means that the rank of $H(n,q)$ is at least $\sum_{i=1}^n |L'_i| = \sum_{i=1}^n(q-1)^{i-1}q^{\frac{n^2+n}{2} - i} = q^{\frac{n^2-n}{2}} (q^n-(q-1)^n)$. The proof is thus completed.

\end{proof}

\section{Simulation Results}\label{sec-sim}

In this section, we present some simulation results of our codes with the help of LDPC simulation facility provided by Magma~\cite{BCP97}. All the simulations were conducted over the additive white Gaussian noise (AWGN) channel, comparing the performance in terms of both the accumulated word-error rate (WER) and the accumulated bit-error rate (BER) in $50,000$ transmissions. Three LDPC codes constructed by our method, $\C^T(2,2), \C(2,4), \C^T(2,4)$, were compared with randomly
generated Gallager Codes $R[12,5], R[64,18], R[80,34]$ (also provided by Magma), respectively. Each compared random code has the same length as our code and has either the same dimension as our code or one less.

It is seen that our codes perform better than random codes with respect to both WER and BER. We remark that our codes are constructive and are possibly well studied in terms of determined properties, e.g., the minimum distance, the stopping distance, etc.

\begin{figure}[htbp]\label{fig-ct22}
\centering
  \begin{minipage}[h]{.4\textwidth}
  \centering
    \includegraphics[width=.8\textwidth]{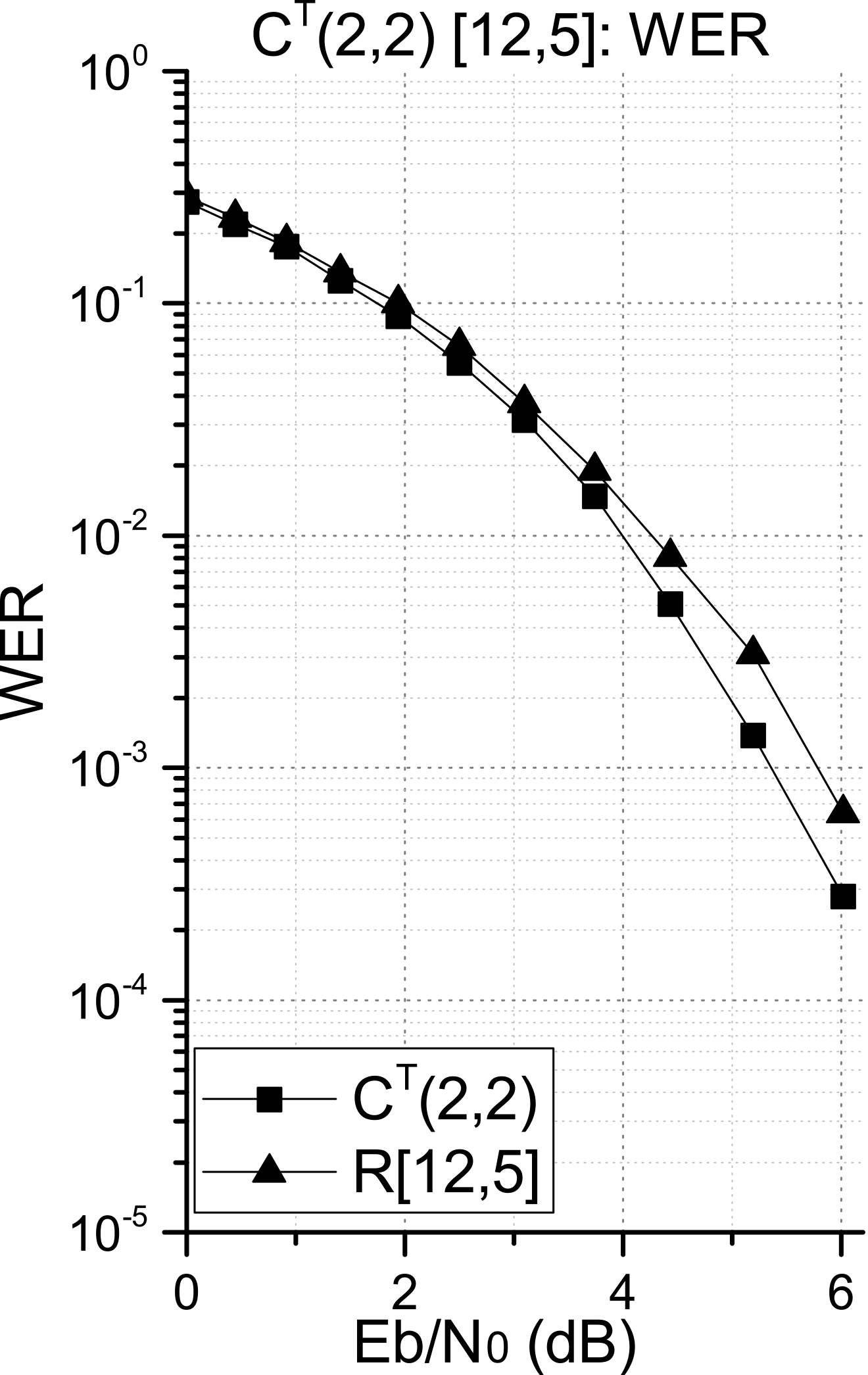}
  \end{minipage}
  \begin{minipage}[h]{.4\textwidth}
  \centering
    \includegraphics[width=.8\textwidth]{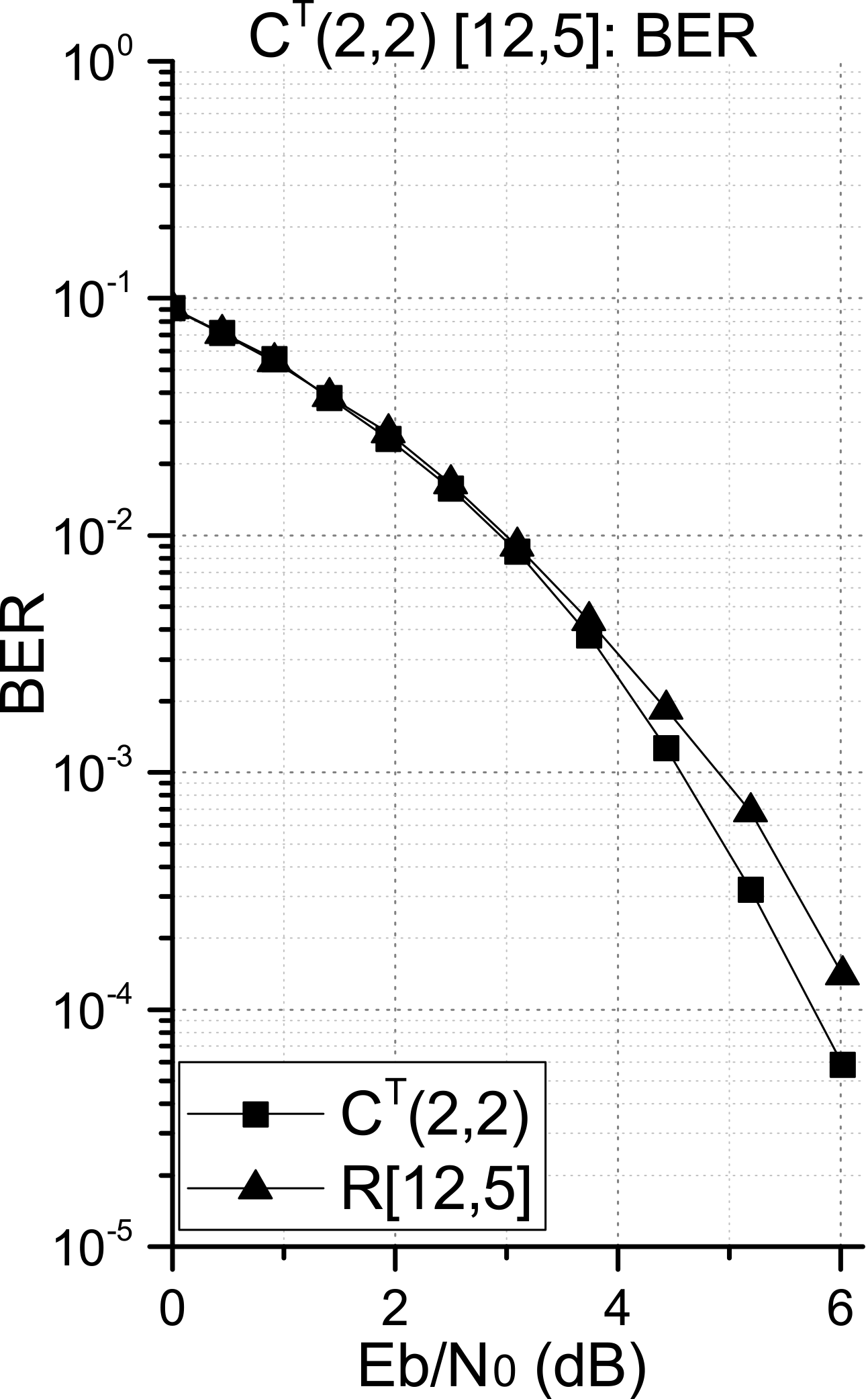}
  \end{minipage}
  \begin{minipage}[h]{.12\linewidth}
  \end{minipage}
  \caption{$\C^T(2,2)$ is a $[12,5]$ code, $R[12,5]$ is a randomly generated $(2,3)$-regular LDPC code.}
\end{figure}

\begin{figure}[htbp]\label{fig-c24}
\centering
  \begin{minipage}[h]{.4\textwidth}
  \centering
    \includegraphics[width=.8\textwidth]{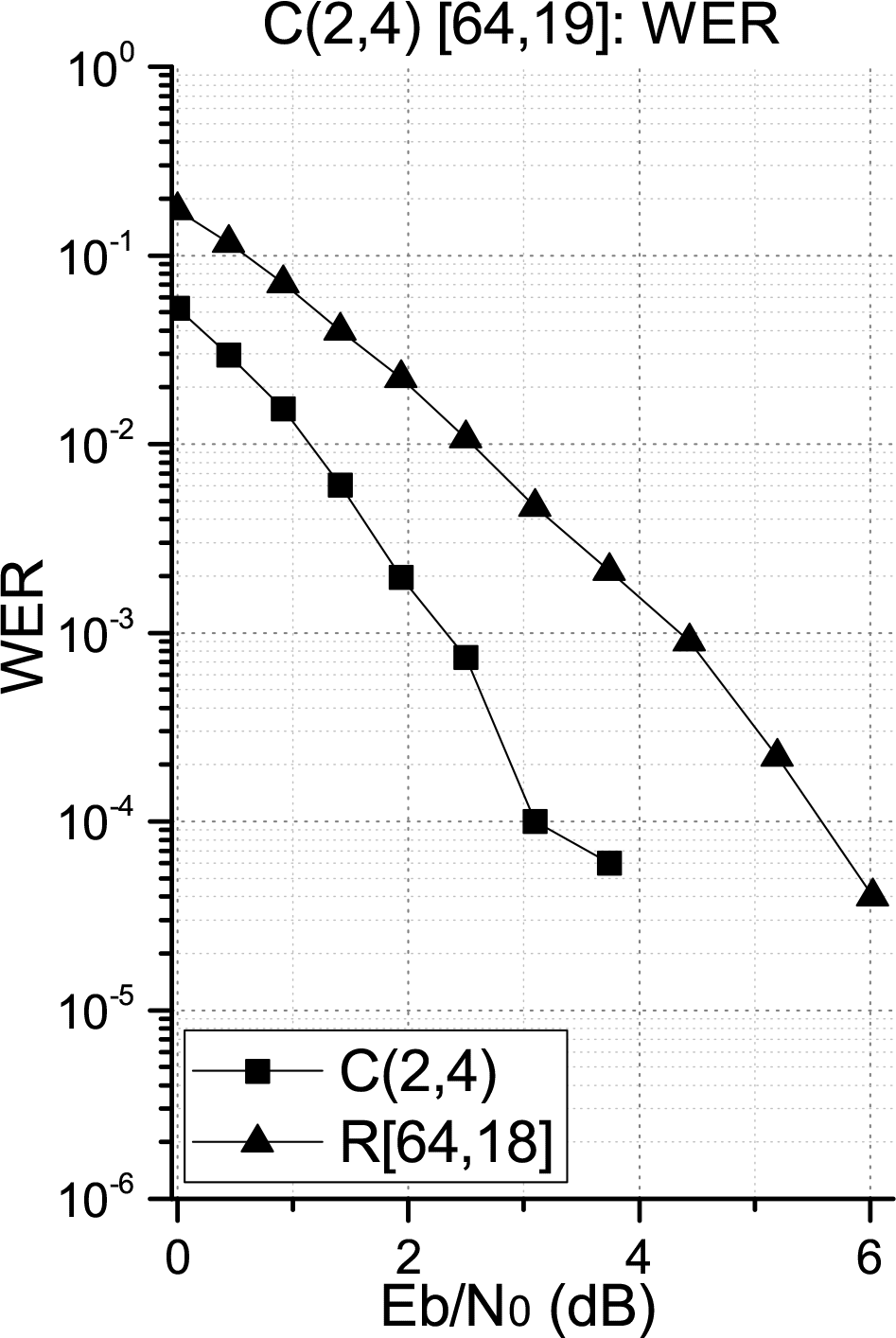}
  \end{minipage}
  \begin{minipage}[h]{.4\textwidth}
  \centering
    \includegraphics[width=.8\textwidth]{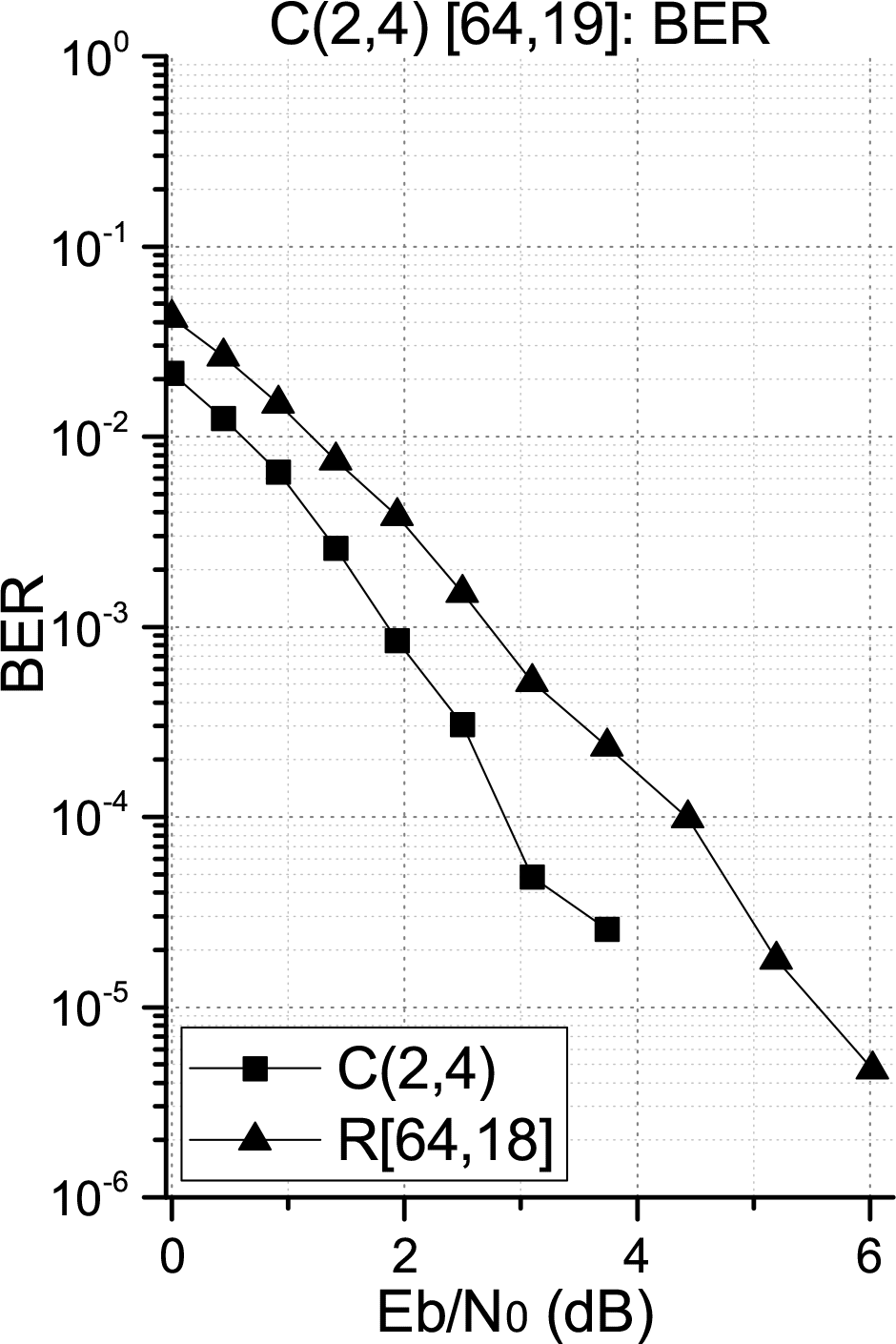}
  \end{minipage}
  \begin{minipage}[h]{.12\linewidth}
  \end{minipage}
  \caption{$\C(2,4)$ is a $[64,19]$ code, $R[64,18]$ is a randomly generated $(4,3)$-regular LDPC code.}
\end{figure}

\begin{figure}[htbp]\label{fig-ct24}
\centering
  \begin{minipage}[h]{.4\textwidth}
  \centering
    \includegraphics[width=.8\textwidth]{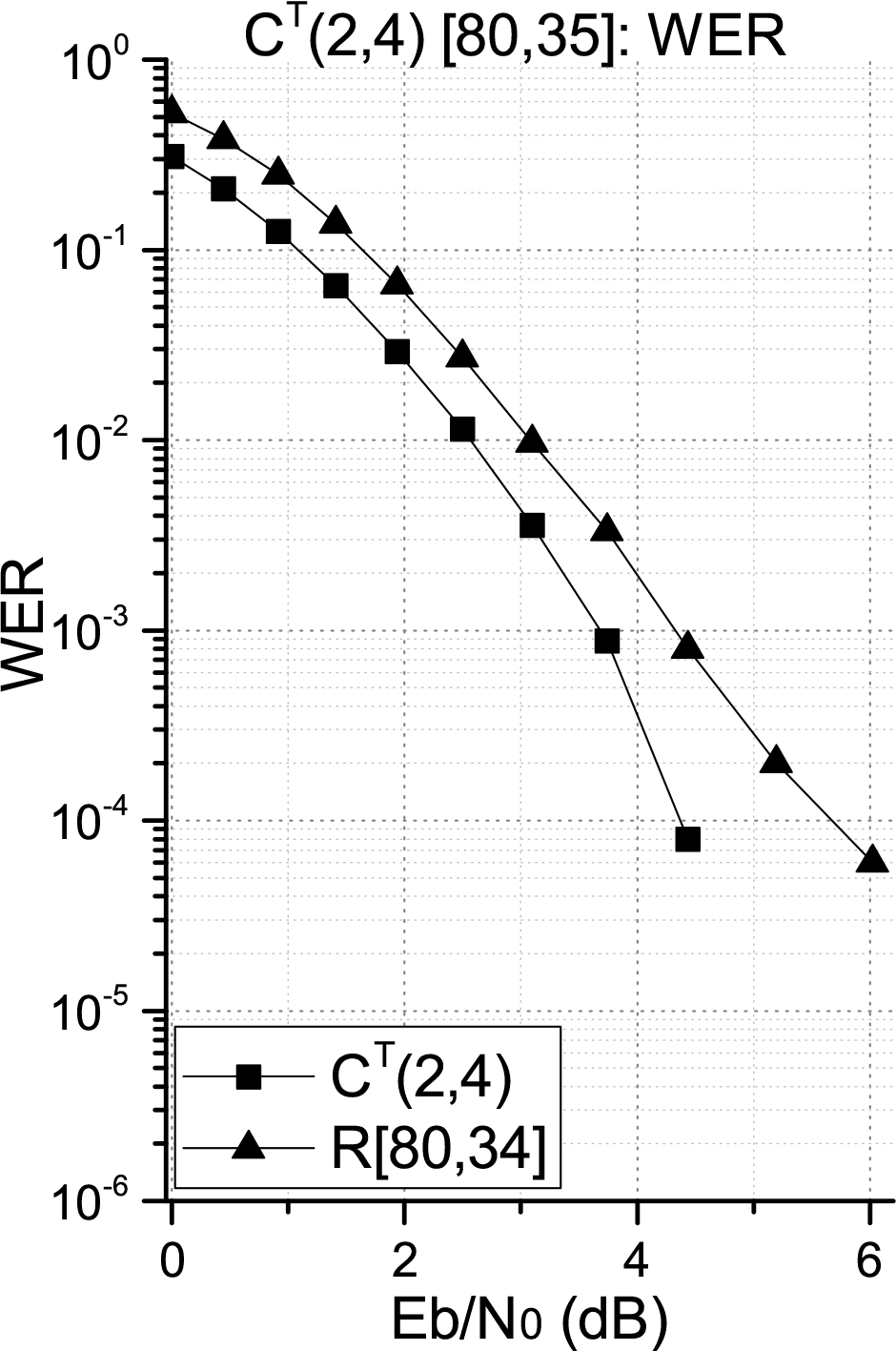}
  \end{minipage}
  \begin{minipage}[h]{.4\textwidth}
  \centering
    \includegraphics[width=.8\textwidth]{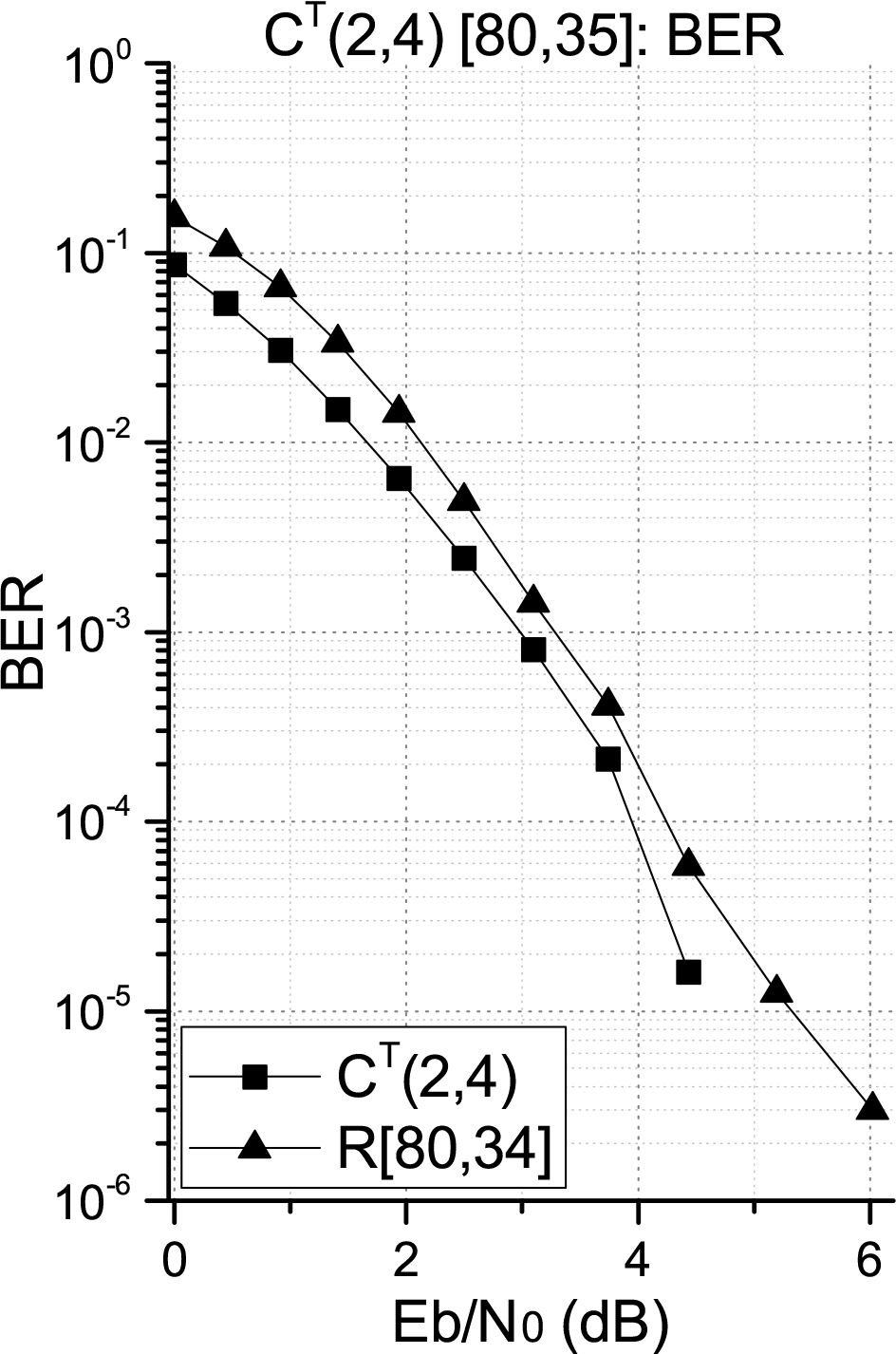}
  \end{minipage}
  \begin{minipage}[h]{.12\linewidth}
  \end{minipage}
  \caption{$\C^T(2,4)$ is a $[80,35]$ code, $R[80,34]$ is a randomly generated $(5,3)$-regular LDPC code.}
\end{figure}

\section{Concluding remarks}\label{sec-con}

We proposed a new approach to constructing LDPC codes using the space of symmetric matrices over $\F_q$. While both the minimum distance and the stopping distance of $\C^T(n,q)$ have been determined, it remains a challenge to determine these two properties of $\C(n,q)$ for general $n$ and $q$. We note that the same method can be applied to the space of $n\times n$ alternate matrices over $\F_{q}$ and the space of $n\times n$ Hermitian matrices over $\F_{q}$.

\appendix
\section*{Proof of Lemma~\ref{lem-lower4q}}\label{sec-app}

\begin{proof}
  Since $\mathbb{A}$ is the point set corresponding to a set of linearly dependent columns of $H(2,q)$ over $\F_2$, it intersects with each line of the line set $L(2,q)$ at an even number of points. We construct a graph $G_{\mathbb{A}}$ based on $\mathbb{A}$, with $\mathbb{A}$ as its vertex set and the adjacency relation defined as follows: $P_i$ is adjacent to $P_j$ if and only if  $P_i, P_j$ are collinear, where $P_i, P_j\in \mathbb{A}$.

Now we choose a ``maximal'' bipartite subgraph from $G_{\mathbb{A}}$. Let
  $$
  \mathcal{B} = \{\mathbb{A}_i \subseteq \mathbb{A}: \ \textrm{every two points of $\mathbb{A}_i$ are non-collinear and $| U(\mathbb{A}_i)|=q$} \} ,
  $$ and define
  \begin{eqnarray*}
    s & = & \max\{| \mathbb{A}_i|: \mathbb{A}_i \in \mathcal{B}\},\\
    \mathcal{C} & = & \{\mathbb{A}_i \in \mathcal{B}: | \mathbb{A}_i|=s\},\\
    t & = & \max \{| U(\mathbb{A}_i)\cap \mathbb{A}|: \mathbb{A}_i \in \mathcal{C}\},\\
    \mathcal{D} & = & \{\mathbb{A}_i \in \mathcal{C}: |U(\mathbb{A}_i)\cap \mathbb{A}|=t\}.
\end{eqnarray*}

Then let $\mathbb{A}_{\max}\in \mathcal{D}$ and $\mathbb{B}_{\max}=U(\mathbb{A}_{\max})\cap \mathbb{A}$. Notice that $|\mathbb{A}_{\max}| = s$ and $|\mathbb{B}_{\max}|=t$. By Remark~\ref{rmk-common}, we obtain a bipartite subgraph $(\mathbb{A}_{\max}, \mathbb{B}_{\max})$ of $G_{\mathbb{A}}$. It is easy to see that $2 \leq |\mathbb{A}_{\max}| \leq q$ and $0 \leq |\mathbb{B}_{\max}| \leq |\mathbb{A}_{\max}|$. Suppose that $\mathbb{A}_{\max}=\{P_1, \ldots, P_s \}$ and $\mathbb{B}_{\max}=\{Q_1, \ldots, Q_t \} $. We are going to find two more point subsets $\mathbb{C}_{\max}, \mathbb{D}_{\max}$ of $\mathbb{A}$ such that these two sets together with $\mathbb{A}_{\max}$ and $\mathbb{B}_{\max}$ are pairwise disjoint. Furthermore, the set $\mathbb{C}_{\max}$ has the property that each point of $\mathbb{C}_{\max}$ is collinear with some point of $\mathbb{A}_{\max}$. Similarly, each point of $\mathbb{D}_{\max}$ is collinear with some point of $\mathbb{B}_{\max}$.

By Lemma~\ref{lem-maxset}, there are $q+1$ lines passing through $P_{i}$ for every $1\leq i\leq s$. Hence besides the lines $\ell_{i,k} = \{P_i + x(Q_k - P_i):x\in \F_q\}$ with $1 \leq k \leq t$, there remain $q+1-t$ other lines passing through $P_{i}$, here also denoted by $\ell_{i,k}$ with $t+1 \leq k \leq q+1$. For each $t+1 \leq k \leq q+1$, the line $\ell_{i,k}$ contains at least one point $Q_{i,k}$ of $\mathbb{A}$ that is distinct from $P_i$. Let $\mathbb{C}_i = \{ Q_{i,t+1},
\ldots, Q_{i,q+1} \}$, then it is verified that $\mathbb{C}_i \cap \mathbb{B}_{\max} = \emptyset$ for every $ 1 \leq i \leq s$.  We claim that $\mathbb{C}_i \cap \mathbb{C}_j = \emptyset$ for all $1 \leq  i < j \leq s$. Indeed, if there is a certain $k$ with $t+1 \leq k \leq q+1$ such that $Q_{i,k} \in \mathbb{C}_i \cap \mathbb{C}_j$, then $Q_{i,k}\in U(P_i ) \cap U(P_j)$. It follows that $Q_{i,k}\in U(\mathbb{A}_{\max})\cap \mathbb{A} = \mathbb{B}_{\max}$ (see
Remark~\ref{rmk-common}). This is a contradiction to $Q_{i,k} \notin \mathbb{B}_{\max}$. Hence, we have $\mathbb{C}_i \cap \mathbb{C}_j = \emptyset$, for all $1 \leq i < j\leq s$, as claimed. Let $\mathbb{C}_{\max} = \bigcup\limits_{i=1}^{s} \mathbb{C}_i$, then $\mathbb{C}_{\max} \cap \mathbb{B}_{\max}=\emptyset$. Moreover, we claim that $\mathbb{C}_{\max} \cap \mathbb{A}_{\max} = \emptyset$. Suppose on the contrary that there exist points $P_{l}\in
\mathbb{A}_{\max}$ and $Q_{i,k}\in \mathbb{C}_{\max}$, such that $P_l = Q_{i,k}$. Clearly $l \neq i$. Since $P_l = Q_{i,k}$ and $P_i$ are on the same line $\ell_{i,k}$, this is a contradiction to the fact that every two points of $\mathbb{A}_{\max}$ are non-collinear. Therefore, we have $\mathbb{C}_{\max}\cap \mathbb{A}_{\max} = \emptyset$.

As discussed above, we just obtained the point set $\mathbb{C}_{\max}$ by the points that are collinear with some point of $\mathbb{A}_{\max}$. In a similar way, by the points that are collinear with some point of $\mathbb{B}_{\max}$, we obtain a point set $\mathbb{D}_{\max} = \bigcup\limits_{i=1}^{t} \mathbb{D}_i$, where $\mathbb{D}_i = \{P_{i,s+1}, \ldots, P_{i,q+1}\}$, $P_{i,j}\in \mathbb{A} \setminus \mathbb{A}_{\max}$ is collinear with the point $Q_i$ of
$\mathbb{B}_{\max}$ and every two points of $\mathbb{D}_i$ are non-collinear. With similar argument to what we used for $\mathbb{C}_{\max}$, we can show that $\mathbb{D}_{\max} \cap \mathbb{A}_{\max} = \mathbb{D}_{\max} \cap \mathbb{B}_{\max} = \emptyset$.

Now we show that $\mathbb{C}_{\max}\cap \mathbb{D}_{\max} = \emptyset$ by contradiction. Assume that there exists $P_{i,j}\in \mathbb{D}_{\max}$ and $Q_{k,l}\in \mathbb{C}_{\max}$ with $P_{i,j} = Q_{k,l}$. Then by construction of $\mathbb{C}_{\max}$ and $\mathbb{D}_{\max}$, $P_{i,j}$ is collinear with $Q_i$ and $Q_{k,l}$ is collinear with $P_{k}$. Since $\mathbb{B}_{\max} = U(\mathbb{A}_{\max}) \cap \mathbb{A}$, $Q_{i}$ and $P_k$ are collinear. Therefore,
\begin{align*}
  P_{i,j}&\sim \{P_{i,j}+x(Q_{i}-P_{i,j}):x\in \F_q\} \sim Q_{i}\\
  &\sim \{Q_{i}+x(P_{k}-Q_{i}): x \in \F_q\} \sim P_k\\
  &\sim \{P_{k}+x(Q_{k,l}-P_{k}): x \in \F_q\} \sim Q_{k,l} = P_{i,j}
\end{align*}
is a cycle of length 6, contradicting to Theorem~\ref{thm-girth}.

Summing up what we have discussed above, we know that $\mathbb{A}_{\max}, \mathbb{B}_{\max}, \mathbb{C}_{\max}$ and $\mathbb{D}_{\max}$ are pairwise disjoint subsets of $\mathbb{A}$. Hence, we have
$$
|\mathbb{A}| \geq |\mathbb{A}_{\max}| + |\mathbb{B}_{\max}| + |\mathbb{C}_{\max}| + |\mathbb{D}_{\max}| = (q+2)s+(q+2)t-2st.
$$

We now consider the following three cases.

\noindent Case 1: $t \geq 2$

Since $2\leq s \leq q,\ 2\leq t \leq s \leq q$, so $2\leq \sqrt{st} \leq q$, we have
$$
|\mathbb{A}| \geq (q+2)s+(q+2)t-2st \geq2(q+2)\sqrt{st}-2st = 4q + (\sqrt{st} - 2) (2q - 2 \sqrt{st}) \geq 4q.
$$

\noindent Case $2$: $t=1$

In this case, if $s \geq 3$, then $|\mathbb{A}| \geq 4q+2 > 4q$. Now assume that $s = 2$. We claim that $Q_{2,1}$ is collinear with at most one point in $\mathbb{B}_{\max} \cup \mathbb{C}_{\max} = \{Q_1, Q_{1,1}, \ldots, Q_{1,q}\}$. Suppose on the contrary that $Q_{2,1}$ is collinear with two distinct points of $\{Q_{1}, Q_{1,1}, \ldots, Q_{1,q}\}$, say $Q'$ and $Q''$. Since $\mathbb{B}_{\max} = U(\mathbb{A}_{\max}) \cap \mathbb{A}$ and by construction of $\mathbb{C}_{\max}$,
$P_1$ is collinear with every point in $\{Q_1, Q_{1,1}, \ldots, Q_{1,q}\}$. In particular, $P_{1}$ is collinear with $Q'$ and $Q''$. Hence we have found in $\mathbb{A}$ two subsets $\mathbb{A}'_{\max} = \{Q',Q''\}$ and $\mathbb{B}'_{\max} = \{P_{1}, Q_{21}\}$, such that $Q'$ and $Q''$ are non-collinear, $| U(\mathbb{A}'_{max})| = q$, and $\mathbb{B}'_{\max} \subseteq U(\mathbb{A}'_{\max}) \cap \mathbb{A}$, which contradicts to $s=2$ and $t=1$. Similarly, $Q_{2,1}$ is non-collinear with any point of $\mathbb{A}_{\max} \cup \mathbb{D}_{\max} = \{P_{1}, P_{2}, P_{1,3}, \ldots, P_{1,q+1}\}$ except for $P_2$. Hence, $Q_{2,1}$ is collinear with at most 2 points of $\mathbb{A}_{\max} \cup \mathbb{B}_{\max} \cup \mathbb{C}_{\max} \cup \mathbb{D}_{\max}$. On the other hand, by Lemma \ref{lem-maxset}, there are $q+1$ distinct lines passing through $Q_{2,1}$. Therefore, there are at least $q-1$ lines passing through $Q_{2,1}$ such that none of them intersect with $\mathbb{A}_{\max} \cup
\mathbb{B}_{\max} \cup \mathbb{C}_{\max} \cup \mathbb{D}_{\max} \setminus \{Q_{2,1}\}$. Denote these lines by $\ell_j$ with $1 \leq j \leq m$ and $m\geq q-1$. Every line $\ell_j$ contains at least one point of $\mathbb{A}$ distinct from $Q_{2,1}$, which we denote by $E_j$. Let $\mathbb{E}_{\max} = \{E_1, \ldots, E_{m}\}$. Clearly we have $\mathbb{A}_{\max} \cup \mathbb{B}_{\max} \cup \mathbb{C}_{\max} \cup \mathbb{D}_{\max} \cup \mathbb{E}_{\max} \subseteq \mathbb{A}$ and $|\mathbb{E}_{\max}| = m \geq q-1$. Thus, $|\mathbb{A}| \geq 3q+2+(q-1)=4q+1>4q$.

\noindent Case 3: $t=0$

In this case, it is easily seen that $s \geq 3$. Consequently, we only have two point sets $\mathbb{A}_{\max}$ and $\mathbb{C}_{\max}$, with  $|\mathbb{A}| \geq (q+2)s $. If $s \geq 4$, $|\mathbb{A}| \geq 4(q+2) > 4q$. Consider in the following that $s=3$. We claim that $Q_{2,1}$ is collinear with at most two points in $\{ Q_{1,1}, \ldots, Q_{1,q+1}\}$. If it is not the case, suppose that $Q_{2,1}$ is collinear with $Q_{1,i}, Q_{1,j}, Q_{1,k}$ with $1\leq i< j< k\leq q+1$.
Then we have $\{P_{1}, Q_{2,1}\}\subseteq U(\{Q_{1,i},Q_{1,j},Q_{1,k}\})$. By Lemma~\ref{lem-threecommon}, we have $|U(\{Q_{1,i},Q_{1,j},Q_{1,k}\})| = q$. Let $\mathbb{A}'_{\max} = \{Q_{1,i}, Q_{1,j},Q_{1,k}\}$ and $\mathbb{B}'_{\max} = \{P_1, Q_{2,1}\}$, then every two points of $\mathbb{A}'_{\max}$ are non-collinear, $| U(\mathbb{A}'_{max})| = q$, and $\mathbb{B}'_{\max} \subseteq U(\mathbb{A}'_{\max}) \cap \mathbb{A}$, contradicting to $s=3$ and $t=0$. Similarly, $Q_{2,1}$ is collinear
with at most $2$ points in $\{ Q_{3,1}, \ldots, Q_{3,q+1}\}$. Moreover, we show that $Q_{2,1}$ is non-collinear with $P_1$ or $P_3$. Indeed, if for instance $Q_{2,1}$ is collinear with $P_1$, then $Q_{2,1}\in U(\{P_1,P_2\})=U(\{P_{1},P_{2},P_{3}\})$ (see Remark~\ref{rmk-common}), which is a contradiction to $t=0$. Hence, $Q_{2,1}$ is collinear with at most $4$ points of $\mathbb{A}_{\max} \cup \mathbb{C}_{\max}$. Besides these lines, there are at least $q-3$ lines passing
through $Q_{2,1}$, which we denote by $\ell_j$ with $1\leq j\leq m$ and $m\geq q-3$. Every line $\ell_j$ contains at least one point $E_j$ of $\mathbb{A}$ that is distinct from $Q_{2,1}$. Let $\mathbb{E}_{\max}=\{E_1,\ldots, E_m\}$. It is clear that $\mathbb{A}_{\max} \cup \mathbb{C}_{\max} \cup \mathbb{E}_{\max} \subseteq \mathbb{A}$ and $|\mathbb{E}_{\max}| = m \geq q-3$. Therefore, $|\mathbb{A}| \geq (3q+6)+(q-3) = 4q+3 > 4q$.

The proof is thus completed.
\end{proof}





\end{document}